\newtheorem{theo}{Theorem}
\newtheorem{defi}{Definition}
\newtheorem{lem}{Lemma}
\newtheorem{rem}{Remark}
\newtheorem{exam}{Example}
\newtheorem{assumption}{Assumption}
\begin{document}

\title{Variable Metric Method for Unconstrained Multiobjective Optimization Problems}

\titlerunning{Variable Metric Method for Unconstrained Multiobjective Optimization Problems}        

\author{Jian Chen$^1$ \and Gaoxi Li$^2$ \and Xinmin Yang$^3$  }


\institute{J. Chen \at Department of Mathematics, Shanghai University,
	Shanghai 200444, China\\
                    chenjian\_math@163.com\\
              G.X. Li \at School of Mathematics and Statistics, Chongqing Technology and Business University, Chongqing 400067, China\\
            ligaoxicn@126.com\\
        \Letter X.M. Yang \at National Center for Applied Mathematics of Chongqing, and School of Mathematical Sciences,  Chongqing Normal University, Chongqing 401331, China\\
        xmyang@cqnu.edu.cn  \\}

\date{Received: date / Accepted: date}

\maketitle

\begin{abstract}
In this paper, we propose a variable metric method for unconstrained multiobjective optimization problems (MOPs). First, a sequence of points is generated
using different positive definite matrices in the generic framework. It is proved that accumulation points of the sequence are Pareto critical points. Then, without convexity assumption, strong convergence is established for the proposed method. Moreover, we use a common matrix to approximate the Hessian matrices of all objective functions, along which, a new nonmonotone line search technique is proposed to achieve a local superlinear convergence rate. Finally, several numerical results demonstrate the effectiveness of the proposed method.

\keywords{Multiobjective optimization \and Variable metric method \and Pareto point \and Superlinear convergence}
\subclass{90C29 \and 90C30}
\end{abstract}

\section{Introduction}
An unconstrained multiobjective optimization problem can be stated as follows:
\begin{align*}
\min\limits_{x\in\mathbb{R}^{n}} F(x), \tag{MOP}\label{MOP}
\end{align*}
where $F:\mathbb{R}^{n}\rightarrow\mathbb{R}^{m}$ is a twice continuously differentiable function. In this framework, several objective functions have to be minimized simultaneously. Generally, there may not be a single optimal solution to reach the optima for all objectives. Instead, there is a set of compromise solutions for (\ref{MOP}), for which none of the objectives can be improved without sacrificing the other objectives. In practice, a lot of problems can be cast into MOPs. It can be found in engineering \cite{1a}, economics \cite{1b}, bioinformatics \cite{1c}, water resource management \cite{1d}, etc. As a widely used mathematical model, research has intensively been made in MOPs.
\par In the past decades, several methods have been developed to solve MOPs \cite{1,2,3,4}. A well-known strategy is the scalarization approach due to Geoffrion \cite{5}. The approach converts a MOP into a single-objective optimization problem (SOP) through a number of predefined parameters so that standard mathematical programming methods can be applied. However, the approach imposes a burden on the decision-maker to choose the parameters, which is unknown in advance. To overcome the drawback, Mukai \cite{6a} proposed the first descent method for MOPs, in which all objective functions decrease along the generated descent direction, and no prior information is needed. Fliege and Svaiter \cite{6} independently reinvented the parameter-free multiobjective optimization method, called the steepest descent method for multiobjective. Comparing with the steepest descent method for SOPs, the steepest descent direction for MOPs is given by solving the following subproblem: 
\begin{align*}
	&\min\limits_{d\in\mathbb{R}^{n}} \max\limits_{i=1,2,...,m}\ \nabla F_{i}(x)^{\mathrm T}d+\frac{1}{2}\|d\|^{2}.
\end{align*}
This is equivalent to the quadratic programming (QP):
\begin{align*}
	&\min\limits_{(t,d)\in\mathbb{R}\times\mathbb{R}^{n}} t + \frac{1}{2}\|d\|^{2},\\
	&\ \ \ \ \mathrm{ s.t.} \ \nabla F_{i}(x)^{\mathrm T}d\leqslant t,\ i=1,2,...,m.
\end{align*}
From then on, some standard mathematical programming methods are generalized to solve MOPs and vector optimization problems \cite{7,8,8a,9,9a,9b,9c,10}. Among these, Povalej \cite{8a} proposed the following direction-finding subproblem of the quasi-Newton method: 
\begin{align*}
	\min\limits_{d\in\mathbb{R}^{n}} \max\limits_{i=1,2,...,m}\ \nabla F_{i}(x)^{\mathrm T}d+\frac{1}{2}d^{\mathrm T}B_{i}(x)d,
\end{align*}
where $B_{i}(x)$ is the approximate Hessian
of objective function $F_{i}$ at $x$. Due to the coupled $\nabla F_{i}(x)$ and $B_{i}(x)$ for $i=1,2,...,m$, the subproblem can be reformed into the quadratically constrained problem (QCP):
\begin{align*}
	&\min\limits_{(t,d)\in\mathbb{R}\times\mathbb{R}^{n}} t,\\
	&\ \ \ \ \mathrm{ s.t.} \ \nabla F_{i}(x)^{\mathrm T}d+\frac{1}{2}d^{\mathrm T}B_{i}(x)d\leqslant t,\ i=1,2,...,m.
\end{align*}
On the one hand, comparing with steepest descent descent method, the rate of convergence is superlinear for quasi-Newton method. On the other hand, solving QCPs is much more time-consuming than QPs. As a result, even if the overall iteration count is reduced due to quasi-Newton directions, each iteration with solving a QCP will require more computational effort than one iteration with solving a QP. The Newton method \cite{7} for MOPs will also encounter this situation. Naturally, in view of the efficient inner iterations of the steepest descent method and rapid outer convergence of the quasi-Newton method, a question arises: can we use a common matrix as an approximation to Hessian matrices of all objective functions so that decreased direction can be obtained by solving a QP and outer iterations enjoy superlinear convergence as well?
\par Variable metric method \cite{25} is one of the main solution strategies for solving constrained optimization problems, in which the calculation of direction $d_{k}$ depends on a common positive definite matrix $B_{k}$. Particularly, $B_{k}$ is an approximation to Hessian matrices of objective function and constraints at $x_{k}$, and convergence rate of the method is superlinear in this setting. It is easy to apply this method to direction-finding subproblems of MOPs. As far as we know, some researchers have already worked on it. Moudden and Mouatasim \cite{10} used $B_{k}=\tau_{k}I$ as a diagonal approximation of Hessian matrices. On the other hand,  Ansary and Panda \cite{10a} used BFGS \cite{21,22,23,24} to update approximate Hessian matrices. However, they didn't prove the superlinear convergence rate of the proposed algorithm, and which is left as their next objective. To our best knowledge, the question is still open. In this paper, we present a variable metric method to remedy second-order methods \cite{7,8a} for unconstrained multiobjective optimization problems. We also prove the local superlinear convergence rate of the proposed method in a special setting, which is a positive answer to the open question.
\par The organization of the paper is as follows. Some preliminaries are given in Sect. 2 for our later use. In Sect. 3, we propose the generic frameworks for solving unconstrained MOPs. Sect. 4 is devoted to prove global and strong convergence of the proposed method, and show that sequence of solutions to dual problems is convergent under reasonable assumptions. Sect. 5 uses BFGS to update approximate Hessian matrices, along which, a new nonmonotone line search technique is proposed to achieve a local superlinear convergence rate. Numerical results are presented in Sect. 6. At the end of the paper, some conclusions are drawn.

\section{Preliminaries }
In this paper, the (\ref{MOP}) is considered in the $n$-dimensional real Euclidean space $\mathbb{R}^{n}$. We denote by $\mathbb{R}_{+}$ the set of nonnegative real numbers, by $\mathbb{R}_{++}$ the set of strictly positive real numbers, and by $\|\cdot\|$ the Euclidean distance in $\mathbb{R}^{n}$. Let $\textbf{B}[x,r]$ be a closed ball with radius $r\in\mathbb{R}^{+}$ and center at $x\in\mathbb{R}^{n}$. We denote by $JF(x)\in\mathbb{R}^{m\times n}$ the Jacobian matrix of $F$ at $x$, by $\nabla F_{i}(x)\in\mathbb{R}^{n}$ the gradient of $F_{i}$ at $x$ and by $\nabla^{2}F_{i}(x)\in\mathbb{R}^{n\times n}$ the Hessian matrix of $F_{i}$ at $x$. $I\in\mathbb{R}^{n\times n}$ denote unit matrix, for two matrices $A$, $B$, $B\leqslant A\ (\mathrm{resp.}\ B<A)$ means that $A-B$ is positive semidefinite (resp. definite). We denote by conv$(S)$ the convex hull of a set $S$.  Let $v$ be a vector, $c$ be a constant, $v\geqslant c$ represents every component of $v$ is greater than or equal to $c$. The Hausdorff distance between two sets is given by 
$$d_{H}(A,B):=\max\{\sup\limits_{a\in A}d(a,B),\ \sup\limits_{b\in B}d(b,A)\},$$
where $d(a,B):=\inf\limits_{b\in B}\{\|a-b\|\}$. 
To optimize $F$, we present the definition of optimal solutions in Pareto sense. We introduce partial order induced by $\mathbb{R}^{m}_{+}=\mathbb{R}_{+}\times\cdots\times\mathbb{R}_{+}$:
$$F(y)\leqslant(\mathrm{resp.}\ <)F(z)\ \Leftrightarrow\ F(z)-F(y)\in\mathbb{R}^{m}_{+}(\mathrm{resp.}\ \mathbb{R}^{m}_{++}).$$
\par Some definitions used in this paper are given below.
\begin{defi}\rm\cite{6b}
	A vector $x^{\ast}\in\mathbb{R}^{n}$ is called Pareto optimum to (\ref{MOP}), if there exists no $x\in\mathbb{R}^{n}$ such that $F(x)\leqslant F(x^{\ast})$ and $F(x)\neq F(x^{\ast})$.
\end{defi}

\begin{defi}\label{d1}\rm\cite{6}
	A vector $x^{\ast}\in\mathbb{R}^{n}$ is called  Pareto critical point to (\ref{MOP}), if
	$$\mathrm{range}(JF(x^{*}))\cap-\mathbb{R}_{++}^{m}=\emptyset,$$
	where $\mathrm{range}(JF(x^{*}))$ denotes the range of linear mapping given by the matrix $JF(x^{*})$.
\end{defi}

\begin{defi}\rm\cite{6}
	A vector $d\in\mathbb{R}^{n}$ is called descent direction for $F$ at $x$, if
	$$JF(x)d\in-\mathbb{R}_{++}^{m}.$$
\end{defi}
\section{ Generic Variable Metric Method for MOPs}
\par For $x\in\mathbb{R}^{n}$, we define $d(x)$, the descent direction obtained by variable metric method, as the optimal solution of 
\begin{align}\label{eq3.1}
	\min\limits_{d\in\mathbb{R}^{n}} \max\limits_{i=1,2,...,m}\ \nabla F_{i}(x)^{\mathrm T}d+\frac{1}{2}d^{\mathrm T}B(x)d
\end{align}
where
$B(x)$ is a positive definite matrix.
\par Since that $\nabla F_{i}(x)^{\mathrm T}d+\frac{1}{2}d^{\mathrm T}B(x)d$ is strongly convex for $i=1,2,...,m$, we conclude that (\ref{eq3.1}) has a unique minimizer. The optimal value and optimal solution of (\ref{eq3.1}) are denoted by $\theta(x)$ and $d(x)$, respectively. Hence,
\begin{equation}\label{E2.1}
	\theta(x) = \min\limits_{d\in \mathbb{R}^{n}}\max\limits_{i=1,2,...,m}\nabla F_{i}(x)^{\mathrm T}d+\frac{1}{2}d^{\mathrm T}B(x)d,
\end{equation}
and
\begin{equation}\label{E2.2}
	d(x) = \arg\min\limits_{d\in \mathbb{R}^{n}}\max\limits_{i=1,2,...,m}\nabla F_{i}(x)^{\mathrm T}d+\frac{1}{2}d^{\mathrm T}B(x)d.
\end{equation}
\par Indeed, (\ref{eq3.1}) can be rewritten equivalently as the following smooth quadratic problem:
\begin{align*}\tag{QP}\label{QP}
	&\min\limits_{(t,d)\in\mathbb{R}\times\mathbb{R}^{n}}\ t+\frac{1}{2}d^{\mathrm T}B(x)d,\\ 
	&\ \ \ \ \mathrm{ s.t.} \ \nabla F_{i}(x)^{\mathrm T}d \leqslant t,\ i=1,2,...,m.
\end{align*}
Notice that (\ref{QP}) is convex and its constraints are linear, then strong duality holds. The Lagrangian function of (\ref{QP}) is
$$L((t,d),\lambda)=t+\frac{1}{2}d^{\mathrm T}B(x)d +\sum\limits_{i=1}^{m}\lambda_{i}(\nabla F_{i}(x)^{\mathrm T}d-t).$$
By Karush-Kuhn-Tucker (KKT) conditions, we have
\begin{equation}\label{E3.3}
	\sum\limits_{i=1}^{m}\lambda_{i}=1,
\end{equation}

\begin{equation}\label{E3.4}
	B(x)d + \sum\limits_{i=1}^{m}\lambda_{i}\nabla F_{i}(x)=0,
\end{equation}

\begin{equation}\label{E3.5}
	\nabla F_{i}(x)^{\mathrm T}d \leqslant t,\ i=1,2,...,m,
\end{equation}

\begin{equation}\label{E3.6}
	\lambda_{i}\geqslant0,\ i=1,2,...,m,
\end{equation}

\begin{equation}\label{E3.7}
	\lambda_{i}(\nabla F_{i}(x)^{\mathrm T}d-t)=0,\ i=1,2,...,m.
\end{equation}
From (\ref{E3.4}) and positive definiteness of $B(x)$, we obtain
\begin{equation}\label{E3.8}
	d(x) = -B(x)^{-1}(\sum\limits_{i=1}^{m}\lambda_{i}(x)\nabla F_{i}(x)),
\end{equation}
where $\lambda(x)=(\lambda_{1}(x),\lambda_{2}(x),...,\lambda_{m}(x))$ is the solution of dual problem:
\begin{align*}\tag{DP}\label{DP}
	 -&\min\limits_{\lambda}\frac{1}{2} (\sum\limits_{i=1}^{m}\lambda_{i}\nabla F_{i}(x))^{\mathrm T}B(x)^{-1}(\sum\limits_{i=1}^{m}\lambda_{i}\nabla F_{i}(x))\\
	&\mathrm{ s.t.} \ \sum\limits_{i=1}^{m}\lambda_{i}=1,\\
	&\ \ \ \ \ \ 	\lambda_{i}\geqslant0,\ i=1,2,...,m.	
\end{align*}
For the sake of simplicity, we denote $\Delta_{m}=\{\lambda:\sum\limits_{i=1}^{m}\lambda_{i}=1,\lambda_{i}\geqslant0,\ i=1,2,...,m\}$.
Recall that strong duality holds, we obtain
\begin{equation}\label{E3.9}
	\theta(x)=-\frac{1}{2} (\sum\limits_{i=1}^{m}\lambda_{i}(x)\nabla F_{i}(x))^{\mathrm T}B(x)^{-1}(\sum\limits_{i=1}^{m}\lambda_{i}(x)\nabla F_{i}(x))=-\frac{1}{2}d(x)^{\mathrm T}B(x)d(x).
\end{equation}
From (\ref{E3.5}), we have
\begin{equation}\label{E3.10}
	\nabla F_{i}(x)^{\mathrm T}d(x) \leqslant t(x)=\theta(x)-\frac{1}{2}d(x)^{\mathrm T}B(x)d(x)=-d(x)^{\mathrm T}B(x)d(x),\ i=1,2,...,m.
\end{equation}
If $\lambda_{i}(x)\neq0$, then (\ref{E3.7}) leads to
\begin{equation}\label{e3.10}
	\nabla F_{i}(x)^{\mathrm T}d(x) = t(x)=-d(x)^{\mathrm T}B(x)d(x).
\end{equation}
It follows by (\ref{E3.3}) and (\ref{E3.7}) that
\begin{equation}\label{E3.11}
	\sum\limits_{i=1}^{m}\lambda_{i}(x)\nabla F_{i}(x)^{\mathrm{T}}d(x)= t(x)=-d(x)^{\mathrm T}B(x)d(x).
\end{equation}
\par Next, we will present some properties of $\theta(x)$ and $d(x)$.
\begin{lem}\rm\label{l3.1}
	Suppose that $B(x)$ is positive definite for all $x\in\mathbb{R}^{n}$, we have
	\begin{itemize}
		\item[$\mathrm{(a)}$] for all $x\in\mathbb{R}^{n}$, $\theta(x)\leqslant0$.
		\item[$\mathrm{(b)}$] the following conditions are equivalent:
		\subitem$\mathrm{(i)}$ The point $x$ is non-critical;
		\subitem$\mathrm{(ii)}$ $\theta(x)<0$;
		\subitem$\mathrm{(iii)}$ $d(x)\neq0.$
		\item[$\mathrm{(c)}$] if $B:$ $\mathbb{R}^{n}\rightarrow\mathbb{R}^{n\times n}$ is continuous, then so are $d$ and $\theta$.
	\end{itemize}
\end{lem}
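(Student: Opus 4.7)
The plan is to handle the three parts in sequence, leaning on the KKT and duality machinery already derived above. Part (a) is immediate: plugging $d=0$ into the unconstrained problem (\ref{eq3.1}) gives objective value $0$, so $\theta(x)\leqslant 0$ by minimality.

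For part (b), I would first observe that (ii) $\Leftrightarrow$ (iii) is an immediate consequence of identity (\ref{E3.9}) together with positive definiteness of $B(x)$: the quantity $-\tfrac{1}{2}d(x)^{\mathrm T}B(x)d(x)$ is strictly negative if and only if $d(x)\neq 0$. The implication (iii) $\Rightarrow$ (i) I would prove by contrapositive, appealing to a Gordan-type alternative theorem: if $x$ is Pareto critical, then there exist multipliers $\lambda\in\Delta_m$ with $\sum_{i=1}^{m}\lambda_i \nabla F_i(x)=0$; this $\lambda$ is feasible for (\ref{DP}) with objective value $0$, so strong duality yields $\theta(x)\geqslant 0$, and combined with (a) this forces $\theta(x)=0$, hence $d(x)=0$ via (\ref{E3.9}). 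For the reverse direction (i) $\Rightarrow$ (iii): if $d(x)=0$, then (\ref{E3.4}) yields $\sum_{i=1}^{m}\lambda_i(x)\nabla F_i(x)=0$ with $\lambda(x)\in\Delta_m$, and any candidate descent direction $\bar d$ satisfying $\nabla F_i(x)^{\mathrm T}\bar d<0$ for all $i$ would then give $0=\sum_i \lambda_i(x)\nabla F_i(x)^{\mathrm T}\bar d<0$, a contradiction.

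Part (c) I would handle by a standard Berge-type subsequence argument. Fix $x^{*}$ and any sequence $x_k\to x^{*}$. The crucial first step is to show that $\{d(x_k)\}$ is bounded: continuity of $B$ provides a uniform lower bound $\mu>0$ on $\lambda_{\min}(B(x_k))$ over a compact neighborhood of $x^{*}$, and combining this with $\theta(x_k)\leqslant 0$, the constraint bound (\ref{E3.10}), and Cauchy--Schwarz yields
\[
\tfrac{1}{2}\mu\|d(x_k)\|^{2} \leqslant \tfrac{1}{2}d(x_k)^{\mathrm T}B(x_k)d(x_k) \leqslant -\max_{i}\nabla F_i(x_k)^{\mathrm T}d(x_k) \leqslant \max_{i}\|\nabla F_i(x_k)\|\cdot\|d(x_k)\|,
\]
which bounds $\|d(x_k)\|$ uniformly. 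Then for any convergent subsequence $d(x_{k_j})\to d^{\dagger}$, joint continuity of the objective in $(x,d)$ together with uniqueness of the minimizer of (\ref{eq3.1}) forces $d^{\dagger}=d(x^{*})$; hence the full sequence $d(x_k)\to d(x^{*})$, and continuity of $\theta$ then follows by passing to the limit in (\ref{E3.9}).

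The main obstacle is precisely the boundedness estimate in (c), which is where the local uniform positive definiteness of $B(\cdot)$ has to be extracted from continuity and merged with the optimality conditions. Once that estimate is in hand, the remainder of (c) is bookkeeping via Bolzano--Weierstrass, and parts (a) and (b) reduce to mechanical use of the KKT and dual relations; the only nontrivial ingredient in (b) is the Gordan-type alternative converting the geometric criticality condition into the existence of KKT multipliers with $\sum_i \lambda_i \nabla F_i(x)=0$.
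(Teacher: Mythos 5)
Your proof is correct, but it takes a genuinely different route from the paper's in part (c). For (a) and (b) the paper gives no argument at all: it simply defers to \cite[Lemma 3.2]{8a}; your explicit steps (minimality at $d=0$, the identity (\ref{E3.9}) for (ii)$\Leftrightarrow$(iii), and the Gordan alternative converting criticality into the existence of $\lambda\in\Delta_{m}$ with $\sum_{i}\lambda_{i}\nabla F_{i}(x)=0$, combined with (\ref{E3.4}) for the converse) are precisely the standard ones that reference supplies, so nothing is lost. For (c) the two arguments diverge: the paper works on a compact set $W$ with $aI\leqslant B(x)\leqslant bI$, writes $d(x)=-B(x)^{-1/2}\zeta(x)$ where $\zeta(x)$ is the minimal-norm element of $C(x)=\mathrm{conv}\{B(x)^{-1/2}\nabla F_{i}(x)\}$ (this is exactly what (\ref{DP}) computes), proves Hausdorff continuity of $x\mapsto C(x)$, and then exploits the variational inequality characterizing the nearest-point projection to obtain the quantitative bound $\|\zeta(y)-\zeta(z)\|^{2}\leqslant\epsilon\bigl(\|\zeta(y)\|+\|\zeta(z)\|\bigr)\leqslant\frac{2L}{\sqrt{a}}\epsilon$. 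Your Berge-type argument --- boundedness of $\{d(x_{k})\}$ extracted from $\theta(x_{k})\leqslant0$ together with a local uniform eigenvalue bound on $B$, followed by identification of every subsequential limit with $d(x^{*})$ via joint continuity of the objective and uniqueness of the minimizer of (\ref{eq3.1}) --- is more elementary and sidesteps the need to recognize $\zeta$ as a projection, at the cost of being purely qualitative; the paper's proof additionally delivers an explicit modulus of continuity for $\zeta$ (hence for $d$) in terms of that of $B^{-1/2}\nabla F_{i}$, though this extra precision is not used elsewhere. Both proofs are sound, and continuity of $\theta$ follows in either case from (\ref{E3.9}).
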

\begin{proof}
	Since matrix $B(x)$ is positive definite, then assertions (a) and (b) can be obtained by using the same arguments as in the proof of \cite[Lemma 3.2]{8a}. Next, we prove the assertion (c). It is sufficient to prove that $d$ and $\theta$ are continuous in a fixed but arbitrary compact set $W\subset\mathbb{R}^{n}$. Notice that $B$ and $\nabla F_{i}$ are continuous, and the fact that $B(x)$ is positive definite, then $\|\nabla F_{i}\|$ is bounded and eigenvalues of $B$ are uniformly bounded away from $0$ on $W$, i.e., there exist $L$ and $a,b>0$ such that
	\begin{equation}\label{E3.12}
		L=\max\limits_{x\in W,\ i=1,2,...,m}\|\nabla F_{i}(x)\|
	\end{equation}
	and 
	\begin{equation}\label{E3.13}
		aI\leqslant B(x)\leqslant bI, \ \forall x\in W.
	\end{equation}
	In view of (\ref{E3.8}), we have
	\begin{equation}\label{e3.14}
		d(x)=-B(x)^{-\frac{1}{2}}(B(x)^{-\frac{1}{2}}(\sum\limits_{i=1}^{m}\lambda_{i}(x)\nabla F_{i}(x))),
	\end{equation}
	where $\lambda(x)$ is a solution of (\ref{DP}). Let us first define $C(x)=\mathrm{conv}(\{B(x)^{-\frac{1}{2}}\nabla F_{i}(x):\ i=1,2,...,m\})$ and $\zeta(x)=B(x)^{-\frac{1}{2}}(\sum\limits_{i=1}^{m}\lambda_{i}(x)\nabla F_{i}(x))$, then $\zeta(x)$ is the minimal norm element of $C(x)$ due to $\lambda(x)$ is a solution of (\ref{DP}). Together with (\ref{E3.13}) and (\ref{e3.14}), to prove $d$ is continuous, it is sufficient to show that $\zeta$ is continuous. Next, we will prove that, for a fixed point $y\in W$ and any $\epsilon>0$, there exists $\delta>0$ such that
	\begin{equation}\label{E3.14}
		d_{H}(C(y),C(z))<\epsilon,\ \ \forall z\in \textbf{B}[y,\delta].
	\end{equation}
	Since $B(x)^{-\frac{1}{2}}\nabla F_{i}(x)$ is continuous for $i=1,2,...,m$, without loss of generality, for any $\epsilon>0$, there exists $\delta>0$ such that
	$$\|B(y)^{-\frac{1}{2}}\nabla F_{i}(y)-B(z)^{-\frac{1}{2}}\nabla F_{i}(z)\|<\epsilon,\ \forall z\in \textbf{B}[y,\delta],\ i=1,2,...,m.$$ 
	For any $c\in C(y)$, there exists $\lambda^{c}\in\Delta_{m}$ such that $c =\sum\limits_{i=1}^{m}\lambda_{i}^{c}B(y)^{-\frac{1}{2}}\nabla F_{i}(y)$. Hence, 
	\begin{align*}
		d(c,C(z))\leqslant&\ \|\sum\limits_{i=1}^{m}\lambda_{i}^{c}B(y)^{-\frac{1}{2}}\nabla F_{i}(y)-\sum\limits_{i=1}^{m}\lambda_{i}^{c}B(z)^{-\frac{1}{2}}\nabla F_{i}(z)\|\\
		\leqslant&\ \sum\limits_{i=1}^{m}\lambda_{i}^{c}\|B(y)^{-\frac{1}{2}}\nabla F_{i}(y)-B(z)^{-\frac{1}{2}}\nabla F_{i}(z)\|\\
		<&\ \epsilon
	\end{align*}
	holds for all $z\in\textbf{B}[y,\delta]$. Since $c\in C(y)$ is arbitrary, we have
	$$\sup\limits_{c\in C(y)}d(c,C(z))<\epsilon, \ \ \forall z\in \textbf{B}[y,\delta].$$
	Similarly, we obtain that
	$$\sup\limits_{c\in C(z)}d(c,C(y))<\epsilon, \ \ \forall z\in \textbf{B}[y,\delta].$$
	Therefore, inequality (\ref{E3.14}) holds.
	According to the definition of $\zeta$, it follows that
	$$c_{y}^{\mathrm T}\zeta(y)\geqslant\|\zeta(y)\|^{2},\ \ \forall c_{y}\in C(y),$$
	and
	$$c_{z}^{\mathrm T}\zeta(z)\geqslant\|\zeta(z)\|^{2},\ \ \forall c_{z}\in C(z).$$
	This together with (\ref{E3.14}) implies that there exist $w_{y},w_{z}\in \textbf{B}[0,\epsilon]$ such that
	$$(\zeta(z) + w_{z})^{\mathrm T}\zeta(y)\geqslant\|\zeta(y)\|^{2},$$
	and 
	$$(\zeta(y) + w_{y})^{\mathrm T}\zeta(z)\geqslant\|\zeta(z)\|^{2}.$$
	Consequently, we infer that
	\begin{align*}
		\|\zeta(y)-\zeta(z)\|^{2}&=\|\zeta(y)\|^{2}+\|\zeta(z)\|^{2}-2\zeta(y)^{\mathrm T}\zeta(z)\\
		&\leqslant w_{z}^{\mathrm T}\zeta(y)+w_{y}^{\mathrm T}\zeta(z)\\
		&\leqslant\epsilon(\|\zeta(y)\|+\|\zeta(z)\|)\\
		&\leqslant\frac{2L}{\sqrt{a}}\epsilon,
	\end{align*}
	where the last inequality is given by (\ref{E3.12}), (\ref{E3.13}) and the definition of $C$. Thus, we have proved that $\zeta$ is continuous in $W$, i.e., $d$ is continuous. Finally, the continuity of $\theta$ is the consequence of (\ref{E3.9}) and the fact that $d$ and $B$ are continuous. 
\end{proof}
\par For every iteration $k$, after obtaining the unique descent direction $d_{k}\neq0$, the classical Armijo technique is applied for line search. 
\begin{algorithm}[H]  
	\caption{\ttfamily Armijo\_line\_search} 
	\KwData{$x_{k}\in\mathbb{R}^{n},d_{k}\in\mathbb{R}^{n},JF(x_{k})\in\mathbb{R}^{m\times n},\sigma,\gamma\in(0,1),\alpha=1$}
	\While{$F(x_{k}+\alpha d_{k})- F(x_{k}) \nleq \sigma\alpha JF(x_{k})d_{k}$}{~\\$\alpha\leftarrow \gamma\alpha$ }\Return{$\alpha_{k}\leftarrow \alpha$}  
\end{algorithm}
\par The following result shows that the Armijo technique will accept a step size along with $d(x)$.
\begin{lem}\rm\label{l3.2}\cite[Lemma 4]{6}
	Assume that $x\in\mathbb{R}^{n}$ is a noncritical point of $F$. Then, for any $\sigma\in(0,1)$ there exists $\alpha_{0}\in (0,1]$ such that
	$$F_{i}(x+\alpha d(x)) - F_{i}(x)\leqslant \sigma \alpha JF(x)d(x)$$
	holds for all $\alpha\in [0,\alpha_{0}]$ and $i\in\{1,2,...,m\}$.
\end{lem}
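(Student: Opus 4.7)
The plan is to apply a first-order Taylor expansion of each $F_i$ along the ray $x+\alpha d(x)$ and then exploit the strict descent estimate in (\ref{E3.10}) to absorb the remainder term, finally taking a minimum over the finitely many indices $i$ to obtain a single step size $\alpha_0$ that works simultaneously.

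First, for a fixed $i\in\{1,2,\ldots,m\}$, I would write the Taylor expansion of $F_i$ at $x$ with increment $\alpha d(x)$:
\begin{equation*}
F_i(x+\alpha d(x)) - F_i(x) = \alpha\,\nabla F_i(x)^{\mathrm T} d(x) + r_i(\alpha),
\end{equation*}
where $r_i(\alpha)/\alpha \to 0$ as $\alpha\to 0^{+}$ by continuous differentiability of $F_i$. Rearranging,
\begin{equation*}
F_i(x+\alpha d(x)) - F_i(x) - \sigma\alpha\,\nabla F_i(x)^{\mathrm T} d(x) = (1-\sigma)\,\alpha\,\nabla F_i(x)^{\mathrm T} d(x) + r_i(\alpha).
\end{equation*}
Since $x$ is noncritical, Lemma \ref{l3.1}(b) gives $d(x)\neq 0$, and the KKT-derived bound (\ref{E3.10}) yields $\nabla F_i(x)^{\mathrm T} d(x)\leqslant -d(x)^{\mathrm T}B(x)d(x)$. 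Because $B(x)$ is positive definite and $d(x)\neq 0$, the quantity $\beta:=d(x)^{\mathrm T}B(x)d(x)$ is strictly positive, so that $(1-\sigma)\alpha\,\nabla F_i(x)^{\mathrm T} d(x)\leqslant -(1-\sigma)\alpha\beta$.

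Next, using $r_i(\alpha)=o(\alpha)$, I would pick $\alpha_0^{(i)}\in(0,1]$ so that $|r_i(\alpha)|\leqslant \tfrac{1}{2}(1-\sigma)\alpha\beta$ for every $\alpha\in(0,\alpha_0^{(i)}]$; substituting back shows that the displayed difference is nonpositive, which is precisely the Armijo condition for index $i$ (and it holds trivially at $\alpha=0$). Finally, since the index set is finite, setting $\alpha_0:=\min_{i=1,\ldots,m}\alpha_0^{(i)}$ yields a single step size for which the inequality holds for all $i$ simultaneously on $[0,\alpha_0]$.

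The argument is a direct adaptation of the classical scalar Armijo proof, so no serious obstacle is expected; the only point requiring care is ensuring the strict inequality $\beta>0$, which rests jointly on noncriticality (giving $d(x)\neq 0$ via Lemma \ref{l3.1}(b)) and on positive definiteness of $B(x)$. Once that is in place, the uniform choice of $\alpha_0$ over the finite index set closes the proof.
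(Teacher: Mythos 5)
Your proof is correct and follows essentially the same route as the source the paper cites for this result (Fliege--Svaiter, Lemma 4): Taylor expansion, the descent bound $\nabla F_i(x)^{\mathrm T}d(x)\leqslant -d(x)^{\mathrm T}B(x)d(x)<0$ from (\ref{E3.10}) together with Lemma \ref{l3.1}(b), absorption of the $o(\alpha)$ remainder, and a minimum over the finitely many indices. The paper itself offers no independent proof, so there is nothing further to reconcile.
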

The generic variable metric method (GVMM) for MOPs is described as follows.
\begin{algorithm}[H]  
	\caption{\ttfamily \ttfamily generic\_variable\_metric\_method\_for\_MOPs}
	\KwData{$x_{0}\in\mathbb{R}^{n}$, $\varepsilon\geqslant0$, positive definite martix $B_{0}\in\mathbb{R}^{n\times n}$}
	\For{$k=0,1,...$}{  $\lambda^{k}\leftarrow \arg\min\limits_{\lambda\in\Delta_{m}}\frac{1}{2} (\sum\limits_{i=1}^{m}\lambda_{i}\nabla F_{i}(x_{k}))^{\mathrm T}B_{k}^{-1}(\sum\limits_{i=1}^{m}\lambda_{i}\nabla F_{i}(x_{k}))$\\  
		$d_{k}\leftarrow-B_{k}^{-1}(\sum\limits_{i=1}^{m}\lambda_{i}^{k}\nabla F_{i}(x_{k}))$\\
		$\theta_{k}\leftarrow-\frac{1}{2}d_{k}^{\mathrm T}(\sum\limits_{i=1}^{m}\lambda^{k}_{i}\nabla F_{i}(x_{k}))$\\
		\eIf{$|\theta_{k}|\leqslant\varepsilon$}{~\\ {\bf{return}} $x_{k}$ }{$\alpha_{k}\leftarrow$ {\ttfamily Armijo\_line\_search}$(x_{k},d_{k},JF(x_{k}))$\\
			$x_{k+1}\leftarrow x_{k}+\alpha_{k}d_{k}$\\
			update $B_{k+1}$  }}  
\end{algorithm}
\begin{rem}
	In Algorithm 2, we obtain the descent direction by solving (\ref{DP}) rather than solving (\ref{QP}). Indeed, as the dimension of decision variables grows, the effort to solve (\ref{QP}) increases. However, the dimension of dual variables, which is equal to the number of objectives, is invariable as the dimension of decision variables grows.
\end{rem}
\section{Convergence Analysis for Generic Variable Metric Method}
In Algorithm 2, we can see that GVMM terminates with an $\varepsilon$-approximate Pareto critical point in a finite number of iterations or generates an infinite sequence. In the sequel, we will suppose that GVMM produces an infinite sequence of noncritical points. The main goal of this section is to show the convergence property of GVMM.
\subsection{Global Convergence}
In this subsection, we will discuss the global convergence of GVMM. First of all, some mild assumptions are presented as follows.
\begin{assumption}\label{a1}
	The sequence $\{B_{k}\}$ of matrices is uniformly positive definite, i.e., there exist two positive constants, $a$ and $b$, such that
	$$aI\leqslant B_{k}\leqslant bI,\ \forall k.$$
\end{assumption}
\begin{assumption}\label{a2}
	For any $x_{0}\in\mathbb{R}^{n}$, the level set $L=\{x:\ F(x)\leqslant F(x_{0})\}$ is compact.
\end{assumption}
\begin{theo}\rm\label{t4.1}
	Suppose that Assumptions \ref{a1} and \ref{a2} hold, sequence $\{x_{k}\}$ is produced by GVMM, then we have
	\begin{itemize}
		\item[$\mathrm{(i)}$] $\lim\limits_{k\rightarrow\infty}x_{k+1} - x_{k}=0$.
		\item[$\mathrm{(ii)}$] $\{x_{k}\}$ has at least one accumulation point, and every accumulation point is a Pareto critical point.
		\item[$\mathrm{(iii)}$] $\lim\limits_{k\rightarrow\infty}d_{k}=0.$
	\end{itemize}
\end{theo}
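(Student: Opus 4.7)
The approach is to chain three ingredients together: the monotone descent supplied by Armijo's rule, a uniform lower bound on $\alpha_k$ coming from the backtracking procedure, and the KKT representation (\ref{E3.8}) of $d_k$. First, combining Armijo with the descent inequality (\ref{E3.10}) and $B_k\geqslant aI$ from Assumption \ref{a1} gives, for every $i$,
$$F_i(x_{k+1})-F_i(x_k)\leqslant \sigma\alpha_k \nabla F_i(x_k)^{\mathrm T}d_k\leqslant -\sigma a\,\alpha_k\|d_k\|^2.$$
Hence $\{x_k\}\subset L$, which is compact by Assumption \ref{a2}, so $\{x_k\}$ is bounded and admits an accumulation point (the first half of (ii)). Telescoping and using that each $F_i$ is bounded below on $L$ gives $\sum_k\alpha_k\|d_k\|^2<\infty$. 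From (\ref{E3.8}), $\lambda^k\in\Delta_m$, $\|B_k^{-1}\|\leqslant 1/a$ and boundedness of $\|\nabla F_i\|$ on $L$, I would also extract a uniform bound $\|d_k\|\leqslant D$.

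Next I would argue that $\alpha_k\geqslant\alpha_{\min}>0$ for all $k$. If $\alpha_k<1$, then the previous trial $\alpha_k/\gamma$ violated the Armijo test for some index $i_k$; applying the mean value theorem and the Lipschitz continuity of $\nabla F_{i_k}$ on the bounded set containing every trial point (available because $F\in C^2$ and $\|d_k\|\leqslant D$), together with $\nabla F_{i_k}(x_k)^{\mathrm T}d_k\leqslant -a\|d_k\|^2$ from (\ref{E3.10}), forces $\alpha_k\geqslant \gamma(1-\sigma)a/L_F$ for a common Lipschitz constant $L_F$. Combined with the summability above, this yields $\|d_k\|\to 0$, which is (iii), and then $\|x_{k+1}-x_k\|=\alpha_k\|d_k\|\leqslant \|d_k\|\to 0$ gives (i). For the remaining half of (ii), let $x_{k_j}\to\bar x$. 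Passing to a further subsequence along which $\lambda^{k_j}\to\bar\lambda\in\Delta_m$ (possible because $\Delta_m$ is compact) and taking limits in $B_{k_j}d_{k_j}=-\sum_i\lambda_i^{k_j}\nabla F_i(x_{k_j})$ using $\|B_{k_j}\|\leqslant b$ and $d_{k_j}\to 0$ yields $\sum_i\bar\lambda_i\nabla F_i(\bar x)=0$ with $\bar\lambda\in\Delta_m$, which by Gordan's theorem of alternatives is exactly the condition in Definition \ref{d1}.

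The step I expect to be most delicate is the uniform lower bound on $\alpha_k$: the trial points $x_k+(\alpha_k/\gamma)d_k$ need not lie in the level set $L$, so one must work on a slightly enlarged bounded set on which a common Lipschitz constant $L_F$ is available, which requires the preliminary uniform bound $\|d_k\|\leqslant D$. I also note that Lemma \ref{l3.1}(c) cannot be invoked for the final limit argument, since the $B_k$ are not the values of a continuous function of $x$; the identification of $\bar x$ as Pareto critical therefore has to be carried out directly through the KKT relation, as above.
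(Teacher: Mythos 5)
Your proposal is correct, but it follows a genuinely different route from the paper. The paper does \emph{not} establish a uniform lower bound on $\alpha_k$; instead, after deriving $\alpha_k\|d_k\|^2\to 0$, it passes to a subsequence along which $x_k\to x^*$ and $B_k\to B^*$ and splits into the cases $\limsup\alpha_k>0$ and $\limsup\alpha_k=0$. In the first case it invokes the continuity statement of Lemma \ref{l3.1}(c) to conclude $d_{k}\to d(x^*)=0$; in the second it uses the failure of the Armijo test at the previous trial step $\alpha_k/\gamma$, divides by $\alpha_k$, and passes to the limit to get $\nabla F_i(x^*)^{\mathrm T}d^*\geqslant 0$ for some $i$, which with (\ref{E3.10}) forces $d^*=0$. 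Assertion (iii) is then obtained by contradiction, again via Lemma \ref{l3.1}(c). Your argument instead proves the stronger statement $\alpha_k\geqslant\alpha_{\min}>0$ (standard backtracking bound $\alpha_k\geqslant\min\{1,2\gamma(1-\sigma)a/L_F\}$, using $\nabla F_{i_k}(x_k)^{\mathrm T}d_k\leqslant -a\|d_k\|^2$ and a common Lipschitz constant on the bounded set $L+\textbf{B}[0,D/\gamma]$), from which $\sum_k\|d_k\|^2<\infty$ and hence (iii), (i), and then (ii) follow directly through the KKT relation $B_kd_k=-\sum_i\lambda_i^k\nabla F_i(x_k)$ and Gordan's theorem. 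What your route buys is twofold: it yields the quantitative step-size bound as a by-product, and it avoids the paper's somewhat loose appeal to Lemma \ref{l3.1}(c), which as stated concerns $B$ being a continuous function of $x$, whereas in GVMM the $B_k$ are produced by an update rule and only converge along a subsequence (the underlying argument does extend to joint continuity in $(x,B)$, but the paper does not say so). What the paper's route buys is that it never needs to name a Lipschitz constant or enlarge the level set, handling the small-step-size regime purely by a limiting argument. Both proofs are valid under the stated hypotheses since $F\in C^2$ and the relevant sets are compact.
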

\begin{proof}
	Since $L=\{x\in\mathbb{R}^{n}|F(x)\leqslant F(x_{0})\}$ is compact and $\{F(x_{k})\}$ is a decreasing sequence, it follows that sequence $\{x_{k}\}$ belongs to $L$, and $\{x_{k}\}$ has at least one accumulation point. From $L$ is compact and $F$ is continuous, we conclude that $\{F(x_{k})\}$ is bounded, which together with the monotonicity $\{F(x_{k})\}$ yields $\{F(x_{k})\}$ is a cauthy sequence. Therefore,
	\begin{equation}\label{e1}
		\lim\limits_{k\rightarrow\infty}(F(x_{k+1})-F(x_{k}))=0.
	\end{equation}
	Recall that $\alpha_{k}$ is chosen as follows:
	\begin{equation}\label{e2}
		F(x_{k}+\alpha_{k}d_{k})\leqslant F(x_{k}) + \sigma \alpha_{k}JF(x_{k})d_{k},
	\end{equation}
	we use Assumption \ref{a1} and (\ref{E3.10}) to obtain
	\begin{equation}\label{e3}
		F(x_{k}+\alpha_{k}d_{k})\leqslant F(x_{k})-a\sigma\alpha_{k}\| d_{k}\|^{2}.
	\end{equation}
	The latter combined with (\ref{e1}) implies
	\begin{equation}\label{e4}
		\lim\limits_{k\rightarrow\infty}(\alpha_{k}\| d_{k}\|^{2})=0.
	\end{equation}
	Therefore, we have
	$$\lim\limits_{k\rightarrow\infty}x_{k+1} - x_{k}=\lim\limits_{k\rightarrow\infty}\alpha_{k}d_{k}=0,$$
	which completes the proof of assertion (i). Next, we will prove the assertion (ii). Since $\{x_{k}\}$ is bounded and $\{B_{k}\}$ is uniformly positive definite, without loss of generality, there exist an infinite index set $K$, a point $x^{*}$ and a positive definite matrix $B^{*}$ such that
	$$x_{k}\rightarrow x^{*},\ B_{k}\rightarrow B^{*},\ k\in K.$$ 
	In view of (\ref{e4}), we distinguish two cases:
	\begin{equation}\label{Eq4.1}
		\limsup\limits_{k\in K}\alpha_{k}>0
	\end{equation}
	and 
	\begin{equation}\label{Eq4.2}
		\limsup\limits_{k\in K}\alpha_{k}=0.
	\end{equation}
	If (\ref{Eq4.1}) holds, then there exists an infinite index set $K_{1}\subset K$ such that
	$$\lim\limits_{k_{1}\in K_{1}}\alpha_{k_{1}}=\bar{\alpha}>0.$$
	This together with (\ref{e4}) implies
	$$\lim\limits_{k_{1}\in K_{1}}d_{k_{1}}=0.$$
	Recall that $B_{k_{1}}\rightarrow B^{*}$ and $x_{k_1}\rightarrow x^{*}$, it follows from Lemma \ref{l3.1} (c) that $d_{k_{1}}\rightarrow d^{*}$, i.e., $d^{*}=0$. Hence, we conclude that $x^{*}$ is a Pareto critical point.
	\par When (\ref{Eq4.2}) occurs, it follows by $\alpha_{k}\geqslant0$ that $\lim\limits_{k\in K}\alpha_{k}=0$. From the Armijo line search, without loss of generality, we have  
	$$F(x_{k}+\frac{\alpha_{k}}{\gamma} d_{k})- F(x_{k}) \nleqslant \sigma\frac{\alpha_{k}}{\gamma} JF(x_{k})d_{k},\ \forall k\in K.$$
	This implies that there exists at least one $j_{k}\in [m]$ such that
	$$F_{j_{k}}(x_{k}+\frac{\alpha_{k}}{\gamma} d_{k})- F_{j_{k}}(x_{k})>\sigma\frac{\alpha_{k}}{\gamma} \nabla F_{j_{k}}(x_{k})^{\mathrm{T}}d_{k},\ \forall k\in K.$$
	Dividing by $\alpha_{k}$ on both sides of the above inequality and let $k\in K$ tends to $\infty$, we obtain
	$$\nabla F_{i}(x^{*})^{\mathrm{T}}d^{*}\geqslant0,$$
	for some $i\in[m]$. It follows by (\ref{E3.10}) that $-d^{*\mathrm{T}}B^{*}d^{*}\geqslant0$, i.e., $d^{*}=0$. Hence, $x^{*}$ is a Pareto critical point.
	\par Finally, we prove the assertion (iii). Suppose by contradiction that there exists an infinite index set $\bar{K}$ and a constant $c>0$ such that $\|d_{k}\|\geqslant c$ holds for all $k\in\bar{K}$. Since $\{x_{k}\}$ is bounded and $\{B_{k}\}$ is uniformly positive definite, without loss of generality, there exist an infinite index set $\bar{K}^{\prime}\subset\bar{K}$, a point $\bar{x}$ and a positive definite matrix $\bar{B}$ such that
	$$x_{k^{\prime}}\rightarrow \bar{x},\ B_{k^{\prime}}\rightarrow\bar{B},\ k^{\prime}\in\bar{K}^{\prime}.$$  
	The latter combined with assertion (c) of Lemma \ref{l3.1} yields $\lim\limits_{k^{\prime}\in\bar{K}^{\prime}}\|d_{k^{\prime}}\|=\|\bar{d}\|\geqslant c>0$. Therefore, by Lemma \ref{l3.1} (b), we conclude that $\bar{x}$ is not a Pareto critical point. Recall that $\bar{x}$ is an accumulation point of $\{x_{k}\}$, this contradicts assertion (ii). Thus, $\lim\limits_{k\rightarrow\infty}d_{k}=0.$ The proof is completed. 
\end{proof} 
\subsection{Strong Convergence}
This subsection is devoted to strong convergence of the proposed method. On the one hand, it should be noted that although the strong convergence property of an algorithm has been extensively studied in SOPs, the researches on the counterpart for MOPs are relatively rare. On the other hand, a question arises: will the gradient-based method converge to a Pareto critical point without convexity assumption? In order to analyze strong convergence property of GVMM, a second-order assumption is presented as follows.
\begin{assumption}\label{a3}
	The sequence $\{x_{k}\}$ produced by GVMM possesses an accumulation point $x^{*}$ with the corresponding multiplier $\lambda^{*}$ such that
	$$d^{\mathrm T}(\sum\limits_{i=1}^{m}\lambda^{*}_{i}\nabla^{2}F_{i}(x^{*}))d>0, \ \ \forall d\in\Omega,$$ where $\Omega:=\{d\neq0: \nabla F_{i}(x^{*})^{\mathrm T}d=0,\ i=1,2,...,m\}$.
\end{assumption}

\begin{rem}
	It is easy to see that convex multiobjective function $F$ with at least one strictly convex component $F_{i}$, and $\lambda^{*}_{i}\neq0$, is sufficient for the above assumption. Moreover, the assumption holds for some nonconvex functions when $\sum\limits_{i=1}^{m}\lambda^{*}_{i}\nabla^{2}F_{i}(x^{*})$ is positive definite. We will give an example to verify it.
\end{rem}

\begin{exam}
	Consider the following multiobjective optimization:
	$$\min\limits_{x\in\mathbb{R}^{2}}(F_{1}(x),F_{2}(x)),$$
	where $F_{1}(x_{1},x_{2})=2x^{2}_{1}-x^{2}_{2}$, $F_{2}(x_{1},x_{2})=-(x_{1}-1)^{2}+2(x_{2}-1)^{2}$. By directly calculating, we have
	$$\nabla F_{1}(x_{1},x_{2})=(4x_{1},-2x_2)^{\mathrm{T}},$$
	$$\nabla F_{2}(x_{1},x_{2})=(-2(x_{1}-1),4(x_2-1))^{\mathrm{T}},$$
	and $$\nabla^{2} F_{1}(x_{1},x_{2})=\begin{pmatrix} 4\ &\ 0\\0\ &\ -2 \end{pmatrix},$$
	$$\nabla^{2} F_{2}(x_{1},x_{2})=\begin{pmatrix} -2\ &\ 0\\0\ &\ 4 \end{pmatrix}.$$
	Substituting $x=(-1,2)^{\mathrm{T}}$ into gradient functions, we have
	$\nabla F_{1}(x)=(-4,-4)^{\mathrm{T}}$, and $\nabla F_{2}(x)=(4,4)^{\mathrm{T}}$. It follows that $\frac{1}{2}\nabla F_{1}(x)+\frac{1}{2}\nabla F_{2}(x)=0$. 
	Note that $F_{1}$ and $F_{2}$ are nonconvex, but $\frac{1}{2}\nabla^{2}F_{1}(x)+\frac{1}{2}\nabla^{2}F_{2}(x)=\begin{pmatrix} 1\ &\ 0\\0\ &\ 1 \end{pmatrix}$ is positive definite. Then Assumption \ref{a3} holds for the nonconvex function $F$.
\end{exam}

\begin{theo}\rm\label{t4.2}
	Suppose that Assumptions \ref{a1}, \ref{a2} and \ref{a3} hold, then $\{x_{k}\}$ produced by GVMM is strongly convergent, i.e., $\lim\limits_{k\rightarrow\infty}x_{k}=x^{*}$.  
\end{theo}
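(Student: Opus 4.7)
The plan is to prove Theorem~\ref{t4.2} by establishing that the accumulation point $x^*$ highlighted in Assumption~\ref{a3} is an isolated Pareto critical point, and then to upgrade the accumulation-point statement of Theorem~\ref{t4.1}(ii) to full convergence of $\{x_k\}$ by exploiting $\|x_{k+1}-x_k\|\to 0$ from Theorem~\ref{t4.1}(i). Preparatorily, since $\{\lambda^k\}\subset\Delta_m$ is bounded and $d_k\to 0$, I will pass to a subsequence with $x_{k_j}\to x^*$ and $\lambda^{k_j}\to\lambda^*$; the defining relation $B_{k_j}d_{k_j}=-\sum_i\lambda^{k_j}_i\nabla F_i(x_{k_j})$ together with $d_{k_j}\to 0$ shows that $\sum_i\lambda^*_i\nabla F_i(x^*)=0$, so $\lambda^*$ is in particular a KKT multiplier at $x^*$.

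For the isolation step I would argue by contradiction, assuming there exist distinct Pareto critical points $y_j\to x^*$. By a standard theorem of alternatives applied to Definition~\ref{d1}, each $y_j$ admits $\mu^j\in\Delta_m$ with $\sum_i\mu^j_i\nabla F_i(y_j)=0$. Setting $t_j=\|y_j-x^*\|$ and $v_j=(y_j-x^*)/t_j$, I would extract subsequences with $\mu^j\to\mu^\infty$ and $v_j\to v^*$, $\|v^*\|=1$. A first-order Taylor expansion of $\sum_i\mu^j_i\nabla F_i$ at $x^*$, followed by division by $t_j$ and passage to the limit, should force $v^*$ to lie in the critical subspace $\Omega$ and force $v^{*T}\bigl(\sum_i\lambda^*_i\nabla^2 F_i(x^*)\bigr)v^*\leq 0$, contradicting Assumption~\ref{a3}.

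Once $x^*$ is known to be isolated, strong convergence will follow from a standard Ostrowski-type topological argument. Choose $\epsilon>0$ so small that $\textbf{B}[x^*,\epsilon]$ contains no other Pareto critical point. If $x_k\not\to x^*$, then $x^*$ being an accumulation point forces infinitely many iterates into $\textbf{B}[x^*,\epsilon/3]$, while the failure of convergence forces infinitely many iterates outside $\textbf{B}[x^*,2\epsilon/3]$. Combined with $\|x_{k+1}-x_k\|\to 0$, every sufficiently late transition between these balls must leave iterates inside the annulus $\textbf{B}[x^*,\epsilon]\setminus\textbf{B}[x^*,\epsilon/3]$; compactness then yields a further accumulation point $\bar x\neq x^*$ inside $\textbf{B}[x^*,\epsilon]$, which by Theorem~\ref{t4.1}(ii) is Pareto critical, contradicting the choice of $\epsilon$.

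The main obstacle lies in the isolation step, and within it specifically in identifying the limit multiplier $\mu^\infty$ with the particular $\lambda^*$ from Assumption~\ref{a3}: the second-order condition is only postulated for that one multiplier, so the Taylor-limit argument requires either an implicit uniqueness of KKT multipliers at $x^*$ or a careful bookkeeping of the residual $\sum_i\mu^j_i\nabla F_i(x^*)/t_j$ in order to cleanly extract the quadratic form on $\Omega$. The remaining pieces---the topological Ostrowski-style dichotomy and the verification that $\lambda^*$ is a KKT multiplier---are routine consequences of the tools already established in Lemma~\ref{l3.1} and Theorem~\ref{t4.1}.
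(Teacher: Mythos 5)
Your overall architecture (isolation of $x^*$ plus an Ostrowski-type argument using $\|x_{k+1}-x_k\|\to 0$) matches the paper's, and your final topological step is fine. But the isolation step, which you correctly identify as the crux, targets the wrong object and in fact attempts to prove a false statement. You try to show that $x^*$ is an isolated \emph{Pareto critical point}. Under Assumptions \ref{a1}--\ref{a3} this is generally false: the Pareto critical set is typically a positive-dimensional manifold. For instance, with $F_1(x)=\|x\|^2$ and $F_2(x)=\|x-2e\|^2$ in $\mathbb{R}^2$, every point of the segment $[0,2e]$ is Pareto critical; at an interior point $x^*$ the gradients are nonzero and antiparallel, so $\Omega$ is the line orthogonal to the segment and $\sum_i\lambda_i^*\nabla^2F_i(x^*)=2I\succ 0$, i.e.\ Assumption \ref{a3} holds, yet $x^*$ is surrounded by other critical points. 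Consequently no amount of bookkeeping of the residual $\sum_i\mu^j_i\nabla F_i(x^*)/t_j$ or of the multiplier mismatch $\mu^\infty$ versus $\lambda^*$ (the two obstacles you flag) can rescue the argument as stated; moreover your Taylor expansion of the aggregated gradient only controls $\sum_i\mu^j_i\nabla F_i(x^*)^{\mathrm T}v^*$ and cannot by itself place $v^*$ in $\Omega$, which requires $\nabla F_i(x^*)^{\mathrm T}v^*=0$ for \emph{every} $i$.

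The missing idea is to prove instead that $x^*$ is an isolated \emph{accumulation point of the sequence} $\{x_k\}$, and the lever for that is the monotonicity of $\{F(x_k)\}$: since $F(x_k)$ decreases componentwise and converges, every accumulation point of $\{x_k\}$ has the same objective value $F(x^*)$. Supposing accumulation points $x_j^*\to x^*$ with $x_j^*\neq x^*$ and $d=\lim_j (x_j^*-x^*)/\|x_j^*-x^*\|$, the identities $F_i(x_j^*)=F_i(x^*)$ combined with a first-order expansion give $\nabla F_i(x^*)^{\mathrm T}d=0$ for each $i$ separately, so $d\in\Omega$; then a second-order expansion of $\sum_i\lambda_i^*F_i$ (using $\sum_i\lambda_i^*\nabla F_i(x^*)=0$) yields $d^{\mathrm T}\bigl(\sum_i\lambda_i^*\nabla^2F_i(x^*)\bigr)d=0$, contradicting Assumption \ref{a3}. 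In the segment example above, nearby critical points have different objective values and hence are not accumulation points of the sequence, which is exactly why this weaker isolation statement survives while yours does not. With this replacement, your Ostrowski dichotomy closes the proof as you describe.
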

\begin{proof} By Theorem \ref{t4.1} (ii), $\{x_{k}\}$ has at least one  accumulation point, and the accumulation point $x^{*}$ is a Pareto critical point. Next, we prove that $x^{*}$ is an isolated accumulation point of $\{x_{k}\}$. Assume in contradiction that $x^{*}$ is not an isolated accumulation point of $\{x_{k}\}$. Then there exists a sequence $\{x_{j}^{*}\}$ satisfies $x_{j}^{*}\neq x^{*}$ such that $\lim\limits_{j\rightarrow\infty}x_{j}^{*}=x^{*}$ and every $x_{j}^{*}$ is an accumulation point of the sequence $\{x_{k}\}$. Since $x^{*}$ is a Pareto critical point, then there exists a vector $\lambda^{*}\in\Delta_{m}$ such that
	\begin{equation}\label{E4.3}
		\sum\limits_{i=1}^{m}\lambda^{*}_{i}\nabla F_{i}(x^{*})=0.
	\end{equation}
	Without loss of generality, we assume that
	\begin{equation}\label{E4.1}
		\lim\limits_{j\rightarrow\infty}\frac{x_{j}^{*}-x^{*}}{\|x_{j}^{*}-x^{*}\|}=d.
	\end{equation}
	Since $\{F(x_{k})\}$ converges to $F^{*}$, then every accumulation point of the sequence $\{x_{k}\}$ has the same value for $F$, that is, 
	\begin{equation}\label{E4.2}
		F(x_{j}^{*})=F(x^{*}),\ \ \forall j. 
	\end{equation}
	Hence, 
	$$0=F_{i}(x_{j}^{*})-F_{i}(x^{*})=\nabla F_{i}(x^{*})^{\mathrm T}(x_{j}^{*}-x^{*})+o(\|x_{j}^{*}-x^{*}\|),\ \ \forall i =1,2,...,m,\ j\rightarrow\infty.$$
	Divide by $\|x_{j}^{*}-x^{*}\|$ on both sides of the above equalities, as $j$ tends to $\infty$, we use (\ref{E4.1}) and (\ref{E4.2}) to get
	$$\nabla F_{i}(x^{*})^{\mathrm T}d=0,\ \ \forall i.$$
	On the other hand, by virtue of (\ref{E4.3}) and (\ref{E4.2}), we have
	\begin{align*}
		0=&\sum\limits_{i=1}^{m}\lambda^{*}_{i}F_{i}(x_{j}^{*}) - \sum\limits_{i=1}^{m}\lambda_{i}^{*}F_{i}(x^{*})\\
		=&(\sum\limits_{i=1}^{m}\lambda_{i}^{*}\nabla F_{i}(x^{*}))^{\mathrm T}(x_{j}^{*}-x^{*}) +\frac{1}{2}(x_{j}^{*}-x^{*})^{\mathrm T}(\sum\limits_{i=1}^{m}\lambda_{i}^{*}\nabla^{2}F_{i}(x^{*}))(x_{j}^{*}-x^{*})+o(\|x_{j}^{*}-x^{*}\|^{2})\\
		=&\frac{1}{2}(x_{j}^{*}-x^{*})^{\mathrm T}(\sum\limits_{i=1}^{m}\lambda^{*}_{i}\nabla^{2}F_{i}(x^{*}))(x_{j}^{*}-x^{*})+o(\|x_{j}^{*}-x^{*}\|^{2}),\ j\rightarrow\infty.
	\end{align*}
	Hence, one has
	$$d^{\mathrm T}(\sum\limits_{i=1}^{m}\lambda^{*}_{i}\nabla^{2}F_{i}(x^{*}))d=0,$$
	which is a contradiction to Assumption \ref{a3}. Therefore, $x^{*}$ is an isolated accumulation point of $\{x_{k}\}$, which together with Theorem \ref{t4.1} (i) yields $\lim\limits_{k\rightarrow\infty}x_{k}=x^{*}$ (see \cite[Theorem 1.1.5]{11}).
\end{proof}
\begin{rem}
	Particularly, if we set $B_{k}=I$, then Theorem \ref{t4.2} is valid in steepest descent method \rm{\cite{6}}. 
\end{rem}

In Lemma \ref{l3.1}, we can not obtain the continuity of $\lambda$, the solution to (\ref{DP}). The following result presents the convergence property of $\{\lambda_{k}\}$ under a reasonable assumption.
\begin{assumption}\label{a4}
	$\{\nabla F_{i}(x^{*}):i\in \mathcal{A}(x^{*})\}$ are affinely independent, where $\mathcal{A}(x^{*}):=\{i:\nabla F_{i}(x^{*})^{\mathrm T}d(x^{*})=t(x^{*})\}$ is the set of active constraints at $x^{*}$.
\end{assumption}
\begin{rem}
	Assumption \ref{a4} is equivalent to the condition that $\{\nabla F_{i}(x^{*})-\nabla F_{i_{0}}(x^{*}):\ i\in \mathcal{A}(x^{*})\setminus i_{0}\}$ are linearly independent for any fixed $i_{0}\in \mathcal{A}(x^{*})$. Thus, Assumption \ref{a4} holds automatically for bi-objective optimization problems.
\end{rem}
\begin{theo}\rm\label{t4.3}
	Suppose that Assumptions \ref{a1}, \ref{a2}, \ref{a3} and \ref{a4} hold, then $\lim\limits_{k\rightarrow\infty}\lambda_{k}=\lambda^{*}$.  
\end{theo}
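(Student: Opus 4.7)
My strategy is to pass the KKT equation defining $\lambda^{k}$ to the limit and then use Assumption \ref{a4} to force uniqueness of the limiting multiplier. By Theorem \ref{t4.2} the full sequence $x_{k}\to x^{*}$; by Theorem \ref{t4.1}(iii), $d_{k}\to 0$; and by Assumption \ref{a1}, the family $\{B_{k}\}$ is uniformly bounded. Since the multipliers $\{\lambda^{k}\}$ lie in the compact simplex $\Delta_{m}$, it is enough to show that every convergent subsequence of $\{\lambda^{k}\}$ has limit $\lambda^{*}$.

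Extract any convergent subsequence $\lambda^{k_{j}}\to\bar\lambda\in\Delta_{m}$. Passing to the limit in the stationarity condition (\ref{E3.4}),
\[B_{k_{j}}d_{k_{j}}+\sum_{i=1}^{m}\lambda_{i}^{k_{j}}\nabla F_{i}(x_{k_{j}})=0,\]
and using $B_{k_{j}}d_{k_{j}}\to 0$ together with continuity of the gradients, one obtains $\sum_{i=1}^{m}\bar\lambda_{i}\nabla F_{i}(x^{*})=0$ with $\bar\lambda\in\Delta_{m}$. Hence $\bar\lambda$ is a KKT multiplier certifying Pareto criticality at $x^{*}$, and so is the fixed $\lambda^{*}$ of Assumption \ref{a3}.

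The final and most delicate step is uniqueness. At a Pareto critical point $x^{*}$ we have $d(x^{*})=0$ and $t(x^{*})=0$, hence $\mathcal{A}(x^{*})=\{1,\dots,m\}$, and Assumption \ref{a4} asserts the full family $\{\nabla F_{i}(x^{*})\}_{i=1}^{m}$ is affinely independent. Writing $\mu_{i}:=\bar\lambda_{i}-\lambda_{i}^{*}$, so that $\sum_{i}\mu_{i}=0$ and $\sum_{i}\mu_{i}\nabla F_{i}(x^{*})=0$, eliminating $\mu_{i_{0}}$ via $\mu_{i_{0}}=-\sum_{i\ne i_{0}}\mu_{i}$ turns the second identity into $\sum_{i\ne i_{0}}\mu_{i}\bigl(\nabla F_{i}(x^{*})-\nabla F_{i_{0}}(x^{*})\bigr)=0$, and the linear independence highlighted in the remark following Assumption \ref{a4} forces every $\mu_{i}$ to vanish, i.e.\ $\bar\lambda=\lambda^{*}$. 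Combined with the compactness of $\Delta_{m}$, this yields $\lambda^{k}\to\lambda^{*}$.

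The main obstacle is precisely this uniqueness argument: one has to recognise that at a Pareto critical point the active index set of the subproblem is automatically the entire $\{1,\dots,m\}$, so that Assumption \ref{a4} really applies to every gradient, and then translate affine independence into the linear independence of the differences that kills the nontrivial combinations satisfying $\sum\mu_{i}=0$. Everything else is a routine compactness plus passage-to-the-limit argument built directly on Theorems \ref{t4.1}--\ref{t4.2} and the uniform bound from Assumption \ref{a1}.
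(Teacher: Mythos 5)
Your argument follows essentially the same route as the paper: use $x_{k}\to x^{*}$ (Theorem \ref{t4.2}), $d_{k}\to 0$ (Theorem \ref{t4.1}(iii)) and the uniform positive definiteness of $B_{k}$ to deduce $\sum_{i}\lambda_{i}^{k}\nabla F_{i}(x_{k})\to 0$, then invoke the affine independence of Assumption \ref{a4} to pin down the limit multiplier. Your write-up is in fact more complete than the paper's, since you make explicit both the subsequence/compactness step on $\Delta_{m}$ and the observation that $d(x^{*})=0$ and $t(x^{*})=0$ force $\mathcal{A}(x^{*})=\{1,\dots,m\}$, so that affine independence of the full gradient family really does yield uniqueness of the multiplier --- details the paper's proof leaves implicit.
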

\begin{proof} From Theorem \ref{t4.2}, we have
	$$\lim\limits_{k\rightarrow\infty}x_{k}=x^{*},$$
	and $x^{*}$ is a Pareto critical point. Then there exists a vector $\lambda^{*}\in\Delta_{m}$ such that
	\begin{equation}\label{E4.5}
		\sum\limits_{i=1}^{m}\lambda^{*}_{i}\nabla F_{i}(x^{*})=0.
	\end{equation}
	Combining Theorem \ref{t4.1} (iii) with (\ref{E3.8}), it follows that
	$$\lim\limits_{k\rightarrow\infty}d_{k}=\lim\limits_{k\rightarrow\infty}-B_{k}^{-1}(\sum\limits_{i=1}^{m}\lambda^{k}_{i}\nabla F_{i}(x_{k}))=0\ \mathrm{and}\ \lambda^{k}\in\Delta_{m}\ \mathrm{for\ all}\ k.$$
	From the fact that $B_{k}$ is uniformly positive definite, we have
	\begin{equation}\label{E4.6}
		\lim\limits_{k\rightarrow\infty}\sum\limits_{i=1}^{m}\lambda^{k}_{i}\nabla F_{i}(x_{k})=0,\ \mathrm{and}\ \lambda^{k}\in\Delta_{m}\ \mathrm{for\ all}\ k.
	\end{equation}
	Recall that $\lim\limits_{k\rightarrow\infty}\nabla F_{i}(x_{k})=\nabla F_{i}(x^{*})$ for all $i=1,2,...,m$, and $\{\nabla F_{i}(x^{*}):\ i\in \mathcal{A}(x^{*})\}$ are affinely independent, we apply (\ref{E4.5}) and (\ref{E4.6}) to imply $$\lim\limits_{k\rightarrow\infty}\lambda^{k}=\lambda^{*}.$$ 
\end{proof}
\section{Local Superlinear Convergence}
We are now in a position to consider local superlinear convergence of the sequence $\{x_{k}\}$, which depends on the particular updating of  $B_{k}$. To approximate Hessian matrices, $B_{k}$ is updated by BFGS formulation, namely,
\begin{equation}\label{E4.12}
	B_{k+1}=\left\{
	\begin{aligned}
		\begin{split}
			&B_{k}-\frac{B_{k}s_{k}s_{k}^{\mathrm T}B_{k}}{s_{k}^{\mathrm T}B_{k}s_{k}}+\frac{y_{k}y_{k}^{\mathrm T}}{s_{k}^{\mathrm T}y_{k}},\ s_{k}^{\mathrm T}y_{k}>0, \\
			&B_{k}, \ \ \ \ \ \ \ \ \ \ \ \ \ \ \ \ \ \ \ \ \ \ \ \ \ \ \ \ \text{otherwise},\\
		\end{split}
	\end{aligned}
	\right.
\end{equation}
where $s_{k}=x_{k+1}-x_{k}$, $y_{k}=\sum\limits_{i=1}^{m}\lambda^{k}_{i}(\nabla F_{i}(x_{k+1})-\nabla F_{i}(x_{k}))$. In order to slove (\ref{DP}), we require the corresponding updating of $B_{k+1}^{-1}$, i.e.,
\begin{equation}\label{E4.13}
	B_{k+1}^{-1}=\left\{
	\begin{aligned}
		&(I-\frac{s_{k}y_{k}^{\mathrm T}}{s_{k}^{\mathrm T}y_{k}})B_{k}^{-1}(I-\frac{y_{k}s_{k}^{\mathrm T}}{s_{k}^{\mathrm T}y_{k}})+\frac{s_{k}s_{k}^{\mathrm T}}{s_{k}^{\mathrm T}y_{k}},\ s_{k}^{\mathrm T}y_{k}>0, \\
		&B_{k}^{-1}, \ \ \ \ \ \ \ \ \ \ \ \ \ \ \ \ \ \ \ \ \ \ \ \ \ \ \ \ \ \ \ \ \ \ \ \ \ \ \ \text{otherwise}.\\
	\end{aligned}
	\right.
\end{equation}
Usually, $s_{k}^{\mathrm T}y_{k}>0$ is called the curvature condition, which guarantees positive definiteness of updated $B_{k+1}$.
\par We require some assumption on the resemblance between the approximate
Hessians $B_{k}$ and the true Hessians $\sum\limits_{i=1}^{m}\lambda^{k}_{i}\nabla^{2}F_{i}(x_{k})$ as the iterates converge to a local Pareto point, which is presented as follows:
\begin{equation}\label{E5}
	\lim\limits_{k\rightarrow\infty}\frac{\|(B_{k}-\sum\limits_{i=1}^{m}\lambda^{k}_{i}\nabla^{2}F_{i}(x_{k}))d_{k}\|}{\|d_{k}\|}=0.
\end{equation}

\begin{rem}
	For the BFGS in numerical optimization, it has been shown in \cite{qn} that 
	$$\lim\limits_{k\rightarrow\infty}\frac{\|(B_{k}-\nabla^{2}f(x^{*}))s_{k}\|}{\|s_{k}\|}=0.$$
	Note that $y_{k}=\sum\limits_{i=1}^{m}\lambda^{k}_{i}(\nabla F_{i}(x_{k+1})-\nabla F_{i}(x_{k}))$ in (\ref{E4.12}) and (\ref{E4.13}). Then equality (\ref{E5}) can be viewed as an aggregated scheme of the above equality. It is worth noting that equality (\ref{E5}) is a general assumption in proving superlinear convergence.
\end{rem}

\par To prove the local superlinear convergence property of the variable metric method with BFGS updates, it is sufficient to show that $d_{k}$ satisfies
$$\lim\limits_{k\rightarrow\infty}\frac{\|x_{k}+d_{k}-x^{*}\|}{\|x_{k}-x^{*}\|}=0$$
and line search condition will allow $\alpha_{k}=1$ when $k$ is sufficiently large. Unfortunately, even assumption (\ref{E5}) holds and Hessian matrices are positive definite for all objective functions, the line search condition still can not guarantee $\alpha_{k}=1$. In what follows, we give an example to claim it.
\begin{exam}\label{ex1}
	Consider the following multiobjective optimization:
	$$\min\limits_{x\in\mathbb{R}^{2}}\ (f_1(x),f_2(x))$$
	where $f_{1}(x_{1},x_{2})=\frac{1}{100}(x_{1}^{2}+x_{2}^{2}$), $f_2(x_{1},x_{2})=(x_{1}-2)^{2}+(x_{2}-2)^{2}$. Then, we have $$\nabla f_{1}(x_{1},x_{2})=\frac{1}{100}(2x_{1},2x_{2})^{\mathrm T},$$ $$\nabla f_{2}(x_{1},x_{2})=(2x_{1}-4,2x_{2}-4)^{\mathrm T},$$
	and
	$$\nabla^{2} f_{1}(x_{1},x_{2})=\begin{pmatrix} \frac{2}{100}\ &\ 0\\0\ &\ \frac{2}{100} \end{pmatrix},$$
	$$\nabla^{2}f_{2}(x_{1},x_{2})=\begin{pmatrix} 2\ &\ 0\\0\ &\ 2 \end{pmatrix}.$$
	When $x^{*}=(1,1)^{\mathrm T}$, we have
	$$\frac{100}{101}\nabla f_{1}(1,1)+\frac{1}{101}\nabla f_{2}(1,1)=(0,0)^{\mathrm T},$$
	so that $x^{*}$ is a Pareto critical point, and the corresponding multiplier $\lambda^{*}=(\frac{100}{101},\frac{1}{101})$. Hence,
	$$\sum\limits_{i=1}^{2}\lambda^{*}_{i}\nabla^{2} f_{i}(x^{*})=\begin{pmatrix}\frac{4}{101}\ &\ 0\\0\ &\ \frac{4}{101} \end{pmatrix}.$$
	Assume that there exists a sequence $\{x_{k}\}$ produced by GVMM such that $\lim\limits_{k\rightarrow\infty}x_{k}=x^{*}$ and $\lim\limits_{k\rightarrow\infty}d_{k}=0$, which combined with Theorem \ref{t4.3} implies 
	$\lim\limits_{k\rightarrow\infty}\lambda_{k}=\lambda^{*}$. It follows by (\ref{E5}) that 
	\begin{equation}\label{E5.4}
		\lim\limits_{k\rightarrow\infty}\frac{|d_{k}^{\mathrm T}(B_{k}-\sum\limits_{i=1}^{m}\lambda^{*}_{i}\nabla^{2}f_{i}(x^{*}))d_{k}|}{\|d_{k}\|^{2}}=0.
	\end{equation}
	By Taylor's expansion, we have   
	\begin{align*}
		f_{2}(x_{k}+d_{k}) - f_{2}(x_{k})&\geqslant \nabla f_{2}(x_{k})^{\mathrm T}d_{k} + \frac{1}{2}d_{k}^{\mathrm T}\nabla^{2} f_{2}(x_{k})d_{k} - \frac{\epsilon_{k}}{2}\|d_{k}\|^{2},\ \epsilon_{k}\rightarrow0.
	\end{align*} 
	If $\alpha_{k}=1$ holds in line search, then
	$$\nabla f_{2}(x_{k})^{\mathrm T}d_{k} + \frac{1}{2}d_{k}^{\mathrm T}\nabla^{2} f_{2}(x_{k})d_{k} - \frac{\epsilon_{k}}{2}\|d_{k}\|^{2}\leqslant\sigma\theta_{k}.$$
	This, combined with (\ref{E3.9}) and (\ref{e3.10}), implies
	\begin{align*}
		d_{k}^{\mathrm T}\nabla^{2} f_{2}(x_{k})d_{k}&\leqslant(2-\sigma)d_{k}^{\mathrm T}B_{k}d_{k}+\epsilon_{k}\|d_{k}\|^{2}\\
		&\leqslant (2-\sigma)(d_{k}^{\mathrm T}(\sum\limits_{i=1}^{m}\lambda^{*}_{i}\nabla^{2}f_{i}(x^{*}))d_{k}+|d_{k}^{\mathrm T}(B_{k}-\sum\limits_{i=1}^{m}\lambda^{*}_{i}\nabla^{2}f_{i}(x^{*}))d_{k}|)+\epsilon_{k}\|d_{k}\|^{2}.
	\end{align*}
Substituting $\nabla^{2} f_{2}(x_{k})=\begin{pmatrix} 2\ &\ 0\\0\ &\ 2 \end{pmatrix}$ and $\sum\limits_{i=1}^{2}\lambda^{*}_{i}\nabla^{2} f_{i}(x^{*})=\begin{pmatrix}\frac{4}{101}\ &\ 0\\0\ &\ \frac{4}{101} \end{pmatrix}$, and dividing by $\|d_{k}\|^{2}$ on the left and right hand sides of the above inequalities, we obtain
$$2\leqslant\frac{4(2-\sigma)}{100}+(2-\sigma)\frac{|d_{k}^{\mathrm T}(B_{k}-\sum\limits_{i=1}^{m}\lambda^{*}_{i}\nabla^{2}f_{i}(x^{*}))d_{k}|}{\|d_{k}\|^{2}}+\epsilon_{k}.$$
Taking $k\rightarrow\infty$, it follows by (\ref{E5.4}) and $\epsilon_{k}\rightarrow0$ that 
$2\leqslant\frac{4(2-\sigma)}{100},$
this contradicts $\sigma\in(0,1)$.
\end{exam}

From Example \ref{ex1}, notice that different objective functions have a similar amount of descent, then objective functions with different magnitude of Lipschitz-smooth constants will lead to relatively small step size. For this reason, we modify the widely used line search technique to achieve a larger step size.
\begin{algorithm}[H]  
	\caption{\ttfamily aggregated\_Armijo\_line\_search}
	\KwData{$x_{k}\in\mathbb{R}^{n},\lambda^{k}\in\Delta_{m},d_{k}\in\mathbb{R}^{n},\theta_{k}<0,\sigma,\gamma\in(0,1),\alpha=1$}
	\While{$\sum\limits_{i=1}^{m}\lambda_{i}^{k}F_{i}(x_{k}+\alpha d_{k})- \sum\limits_{i=1}^{m}\lambda_{i}^{k}F_{i}(x_{k}) > \sigma\alpha\theta_{k}$}{~\\ $\alpha\leftarrow \gamma\alpha$ }\Return{$\alpha_{k}\leftarrow \alpha$} 
\end{algorithm}

\begin{rem}
	The aggregated Armijo line search is nonmonotone, i.e., it can not guarantee that all objective functions are decreasing in each iteration. It is worth noting that the proposed line search technique is different from the widely used nonmonotone line search techniques \cite{24a,24b}. The line search technique regards all objective functions as a whole, consequently, it can be interpreted as an adaptive weighted sum method from the perspective of scalarization.   
\end{rem}
The variable metric method with BFGS (VMM-BFGS) update is described as follows.
\begin{algorithm}[H] 
	\caption{\ttfamily \ttfamily variable\_metric\_method\_with\_BFGS\_for\_MOPs}
	\KwData{$x_{0}\in\mathbb{R}^{n}$, $\varepsilon\geqslant0$, $B_{0}\leftarrow I$}
	\For{$k=0,1,...$}{ $\lambda^{k}\leftarrow \arg\min\limits_{\lambda\in\Delta_{m}}\frac{1}{2} (\sum\limits_{i=1}^{m}\lambda_{i}\nabla F_{i}(x_{k}))^{\mathrm T}B_{k}^{-1}(\sum\limits_{i=1}^{m}\lambda_{i}\nabla F_{i}(x_{k}))$\\  
		$d_{k}\leftarrow-B_{k}^{-1}(\sum\limits_{i=1}^{m}\lambda_{i}^{k}\nabla F_{i}(x_{k}))$\\
		$\theta_{k}\leftarrow-\frac{1}{2}d_{k}^{\mathrm T}(\sum\limits_{i=1}^{m}\lambda^{k}_{i}\nabla F_{i}(x_{k}))$\\
		\eIf{$|\theta_{k}|\leqslant\varepsilon$}{ ~\\ {\bf{return}} $x_{k}$ }{$\alpha_{k}\leftarrow$ {\ttfamily aggregated\_Armijo\_line\_search}$(x_{k},\lambda^{k},d_{k},\theta_{k})$\\
			$x_{k+1}\leftarrow x_{k}+\alpha_{k}d_{k}$\\
			update $B_{k+1}^{-1}$ as in (\ref{E4.13})}}  
\end{algorithm}
\par Before analyzing the local superlinear convergence of the VMM-BFGS, we present some technical results in the sequel.
\par Firstly, we give error estimates of linear and quadratic approximations.
\begin{lem}
	Suppose that $V\subset\mathbb{R}^{n}$ is a convex set. For any $\epsilon>0$, there exists $\delta>0$ such that  $\|\nabla^{2}F_{i}(x)-\nabla^{2}F_{i}(y)\|<\frac{\epsilon}{2}$ holds for all $i=1,2,...,m$ and $x,y\in V$ with $\|y-x\|<\delta$. Let $\{x_{k}\}$ be a sequence produced by VMM-BFGS. Assume there exists $k_{0}\in\mathbb{N}$ such that 
	$$\|(\sum\limits_{i=1}^{m}\lambda^{k}_{i}\nabla^{2}F_{i}(x_{k})-B_{k})(y-x_{k})\|<\frac{\epsilon}{2}\|y-x_{k}\|,\ \forall k>k_{0},$$
	where $\lambda_{k}$ is the solution to (\ref{DP}) at $x_{k}$.
	Then, for any $x_{k}$, $k>k_{0}$, and any $y\in V$ such that $\|y-x_{k}\|<\delta$, we have
	\begin{equation}\label{E4.14}
		\|\sum\limits_{i=1}^{m}\lambda^{k}_{i}\nabla F_{i}(y)-\sum\limits_{i=1}^{m}\lambda^{k}_{i}\nabla F_{i}(x_{k})-B_{k}(y-x_{k})\|\leqslant\epsilon\|y-x_{k}\|,
	\end{equation}
	and 
	\begin{equation}\label{E4.15}
		|\sum\limits_{i=1}^{m}\lambda^{k}_{i}F_{i}(y) - \sum\limits_{i=1}^{m}\lambda^{k}_{i}F_{i}(x_{k})
		-\sum\limits_{i=1}^{m}\lambda^{k}_{i}\nabla F_{i}(x_{k})^{\mathrm T}(y-x_{k}) -\frac{1}{2}(y-x_{k})^{\mathrm T}B_{k}(y-x_{k})|\leqslant\frac{\epsilon}{2}\|y-x_{k}\|^{2}.
	\end{equation}
\end{lem}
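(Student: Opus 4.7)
The plan is to reduce both inequalities to a single Taylor-with-integral-remainder identity, applied componentwise and then aggregated with the multipliers $\lambda^{k}_i$. Since $V$ is convex and $\|y-x_k\|<\delta$, the entire segment $\{x_k+t(y-x_k):t\in[0,1]\}$ lies in $V$, and every point on it is within $\delta$ of $x_k$; this is the ingredient that lets the modulus-of-continuity hypothesis on $\nabla^2 F_i$ be exercised uniformly along the segment.

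For inequality (\ref{E4.14}), I would start from $\nabla F_i(y)-\nabla F_i(x_k)=\int_0^1 \nabla^2 F_i(x_k+t(y-x_k))(y-x_k)\,dt$, multiply by $\lambda_i^k$, and sum over $i$. Subtracting $B_k(y-x_k)$ and inserting and subtracting $\sum_i\lambda_i^k\nabla^2F_i(x_k)(y-x_k)$ inside the integral splits the residual into two pieces: one measuring how much the Hessians $\nabla^2 F_i$ move along the segment, the other measuring the hypothesised closeness of $B_k$ to the aggregated Hessian at $x_k$. Using $\sum_i\lambda_i^k=1$, the first piece is bounded in norm by $\int_0^1 (\epsilon/2)\|y-x_k\|\,dt=(\epsilon/2)\|y-x_k\|$, and the second piece, whose integrand is constant in $t$, contributes exactly $\|(\sum_i\lambda_i^k\nabla^2 F_i(x_k)-B_k)(y-x_k)\|<(\epsilon/2)\|y-x_k\|$. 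Summing yields the factor $\epsilon$.

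For inequality (\ref{E4.15}), I would use the scalar second-order integral remainder $F_i(y)=F_i(x_k)+\nabla F_i(x_k)^{\mathrm T}(y-x_k)+\int_0^1(1-t)(y-x_k)^{\mathrm T}\nabla^2 F_i(x_k+t(y-x_k))(y-x_k)\,dt$, aggregate by $\lambda_i^k$, and use $\int_0^1(1-t)\,dt=1/2$ to rewrite $\tfrac12(y-x_k)^{\mathrm T}B_k(y-x_k)$ as $\int_0^1(1-t)(y-x_k)^{\mathrm T}B_k(y-x_k)\,dt$. The difference then becomes a single integral of $(1-t)(y-x_k)^{\mathrm T}[\sum_i\lambda_i^k\nabla^2 F_i(x_k+t(y-x_k))-B_k](y-x_k)$, which I split in the same two-term fashion as before. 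Cauchy--Schwarz on the second piece (using the hypothesis on $B_k$), the uniform Hessian continuity estimate on the first piece, and the factor $\int_0^1(1-t)\,dt=1/2$ together deliver a bound of $(\epsilon/4+\epsilon/4)\|y-x_k\|^2=(\epsilon/2)\|y-x_k\|^2$.

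There is no serious obstacle: the main technical care is to ensure the segment stays in $V$ so that the uniform bound $\|\nabla^2 F_i(x_k+t(y-x_k))-\nabla^2 F_i(x_k)\|<\epsilon/2$ is legitimate, and to use $\sum_i\lambda_i^k=1$ correctly when passing the norm inside the convex combination. The only bookkeeping subtlety is the appearance of the factor $1/2$ in (\ref{E4.15}) from the $(1-t)$ weight in Taylor's remainder, which is exactly what allows both error contributions to combine into $\epsilon/2$ rather than $\epsilon$.
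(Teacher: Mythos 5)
Your proposal is correct and follows essentially the same route as the paper: both arguments split the residual into the aggregated second-order Taylor error (controlled by the uniform Hessian continuity along the segment, which uses convexity of $V$) and the discrepancy $(\sum_i\lambda_i^k\nabla^2F_i(x_k)-B_k)(y-x_k)$ (controlled by the hypothesis), with the $\lambda_i^k$ passed inside via $\sum_i\lambda_i^k=1$. The only difference is cosmetic: you derive the componentwise Taylor bounds explicitly from the integral-remainder form, whereas the paper invokes them directly.
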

\begin{proof} A direct calculating gives
	\begin{align*}
		&\|\sum\limits_{i=1}^{m}\lambda^{k}_{i}\nabla F_{i}(y)-\sum\limits_{i=1}^{m}\lambda^{k}_{i}\nabla F_{i}(x_{k})-B_{k}(y-x_{k})\|\\
		\leqslant&\|\sum\limits_{i=1}^{m}\lambda^{k}_{i}\nabla F_{i}(y)-\sum\limits_{i=1}^{m}\lambda^{k}_{i}\nabla F_{i}(x_{k})-\sum\limits_{i=1}^{m}\lambda^{k}_{i}\nabla^{2}F_{i}(x_{k})(y-x_{k})\|+\|(\sum\limits_{i=1}^{m}\lambda^{k}_{i}\nabla^{2}F_{i}(x_{k})-B_{k})(y-x_{k})\|
	\end{align*}
	\begin{align*}
		\leqslant&\sum\limits_{i=1}^{m}\lambda^{k}_{i}\|\nabla F_{i}(y)-\nabla F_{i}(x_{k})-\nabla^{2}F_{i}(x_{k})(y-x_{k})\|+\|(\sum\limits_{i=1}^{m}\lambda^{k}_{i}\nabla^{2}F_{i}(x_{k})-B_{k})(y-x_{k})\|\\
		\leqslant&\sum\limits_{i=1}^{m}\lambda^{k}_{i}\frac{\epsilon}{2}\|y-x_{k}\|+\frac{\epsilon}{2}\|y-x_{k}\|\\
		=&\ \epsilon\|y-x_{k}\|.
	\end{align*}
	Next, we prove (\ref{E4.15}).
	\begin{align*}
		&|\sum\limits_{i=1}^{m}\lambda^{k}_{i}F_{i}(y) - \sum\limits_{i=1}^{m}\lambda^{k}_{i}F_{i}(x_{k})
		-\sum\limits_{i=1}^{m}\lambda^{k}_{i}\nabla F_{i}(x_{k})^{\mathrm T}(y-x_{k}) -\frac{1}{2}(y-x_{k})^{\mathrm T}B_{k}(y-x_{k})|\\
		\leqslant&\sum\limits_{i=1}^{m}\lambda^{k}_{i}|F_{i}(y) - F_{i}(x_{k})
		-\nabla F_{i}(x_{k})^{\mathrm T}(y-x_{k}) -\frac{1}{2}(y-x_{k})^{\mathrm T}\nabla^{2} F_{i}(x_{k})(y-x_{k})|\\
		&+|\frac{1}{2}(y-x_{k})^{\mathrm T}(\sum\limits_{i=1}^{m}\lambda^{k}_{i}\nabla^{2}F_{i}(x_{k})-B_{k})(y-x_{k})|\\
		\leqslant&\sum\limits_{i=1}^{m}\lambda^{k}_{i}\frac{\epsilon}{4}\|y-x_{k}\|^{2}+\frac{\epsilon}{4}\|y-x_{k}\|^{2}=\frac{\epsilon}{2}\|y-x_{k}\|^{2}.
	\end{align*}
	The proof is completed.
\end{proof}

We also give some estimate results to mappings $d$ and $\theta$.
\begin{lem}\label{L4.2}
	Let $x\in\mathbb{R}^{n}$ and $a,b\in\mathbb{R}_{+}$ such that $0<a<b$. If
	$$aI\leqslant B(x)\leqslant bI,$$
	then we have
	\begin{itemize}
		\item[$\mathrm{(a)}$] $\frac{a}{2}\|d(x)\|^{2}\leqslant|\theta(x)|\leqslant\frac{b}{2}\|d(x)\|^{2}$,
		\item[$\mathrm{(b)}$] $|\theta(x)|\leqslant\frac{1}{2a}\|\sum\limits_{i=1}^{m}\lambda_{i}\nabla F_{i}(x)\|^{2}$ holds for all $\lambda\in\Delta_{m}$.
	\end{itemize}
\end{lem}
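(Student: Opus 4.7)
The plan is to invoke the closed-form characterizations of $\theta(x)$ that were derived via KKT/strong duality earlier in the paper and then apply elementary spectral bounds coming from the hypothesis $aI\leqslant B(x)\leqslant bI$.

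For part (a), I would start from the identity $\theta(x)=-\frac{1}{2}d(x)^{\mathrm T}B(x)d(x)$ recorded in (\ref{E3.9}), together with Lemma \ref{l3.1}(a) which tells us $\theta(x)\leqslant 0$. Hence $|\theta(x)|=\frac{1}{2}d(x)^{\mathrm T}B(x)d(x)$. The two-sided Loewner inequality $aI\leqslant B(x)\leqslant bI$ immediately gives $a\|d(x)\|^{2}\leqslant d(x)^{\mathrm T}B(x)d(x)\leqslant b\|d(x)\|^{2}$, and dividing by $2$ yields exactly the claimed sandwich.

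For part (b), the natural route is strong duality. The dual problem (\ref{DP}) together with (\ref{E3.9}) gives
\begin{equation*}
-\theta(x)=\min_{\lambda\in\Delta_{m}}\tfrac{1}{2}\bigl(\textstyle\sum_{i=1}^{m}\lambda_{i}\nabla F_{i}(x)\bigr)^{\mathrm T}B(x)^{-1}\bigl(\textstyle\sum_{i=1}^{m}\lambda_{i}\nabla F_{i}(x)\bigr),
\end{equation*}
so for any (not necessarily optimal) $\lambda\in\Delta_{m}$ the right-hand side before taking the minimum is an upper bound for $-\theta(x)=|\theta(x)|$. I would then use that $B(x)\geqslant aI$ implies $B(x)^{-1}\leqslant \frac{1}{a}I$ in the Loewner order (by simultaneously diagonalizing and inverting eigenvalues), to upper bound the quadratic form by $\frac{1}{2a}\|\sum_{i=1}^{m}\lambda_{i}\nabla F_{i}(x)\|^{2}$, which is precisely the bound claimed.

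There is no real obstacle here: both parts are short consequences of the spectral hypothesis combined with results already stated in the paper. The only point worth pausing on is the implication $B(x)\geqslant aI\Rightarrow B(x)^{-1}\leqslant \frac{1}{a}I$, which I would justify in one line via the spectral decomposition of the symmetric positive definite matrix $B(x)$.
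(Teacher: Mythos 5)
Your proposal is correct and follows essentially the same route as the paper: part (a) from the identity $\theta(x)=-\frac{1}{2}d(x)^{\mathrm T}B(x)d(x)$ in (\ref{E3.9}) plus the Loewner bounds, and part (b) from the dual characterization of $\theta(x)$, the suboptimality of an arbitrary $\lambda\in\Delta_{m}$, and the bound $B(x)^{-1}\leqslant\frac{1}{a}I$. No gaps; your remark justifying the inverse Loewner inequality is a minor addition the paper leaves implicit.
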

\begin{proof} Inequalities in (a) are obtained directly from (\ref{E3.9}). Next, we prove assertion (b). From (\ref{E3.9}), we have
	$$\theta(x)=-\frac{1}{2} (\sum\limits_{i=1}^{m}\lambda_{i}(x)\nabla F_{i}(x))^{\mathrm T}B(x)^{-1}(\sum\limits_{i=1}^{m}\lambda_{i}(x)\nabla F_{i}(x)),$$
	where $\lambda(x)=(\lambda_{1}(x),\lambda_{2}(x),...,\lambda_{m}(x))$ is the solution of (\ref{DP}). Then
	\begin{align*}
		\theta(x)&=-\frac{1}{2} (\sum\limits_{i=1}^{m}\lambda_{i}(x)\nabla F_{i}(x))^{\mathrm T}B(x)^{-1}(\sum\limits_{i=1}^{m}\lambda_{i}(x)\nabla F_{i}(x))\\
		&\geqslant-\frac{1}{2} (\sum\limits_{i=1}^{m}\lambda_{i}\nabla F_{i}(x))^{\mathrm T}B(x)^{-1}(\sum\limits_{i=1}^{m}\lambda_{i}\nabla F_{i}(x))
	\end{align*}
	holds for all $\lambda\in\Delta_{m}$. It follows by $\theta(x)<0$ that
	$$|\theta(x)|
	\leqslant|\frac{1}{2} (\sum\limits_{i=1}^{m}\lambda_{i}\nabla F_{i}(x))^{\mathrm T}B(x)^{-1}(\sum\limits_{i=1}^{m}\lambda_{i}\nabla F_{i}(x))|\leqslant\frac{1}{2a}\|\sum\limits_{i=1}^{m}\lambda_{i}\nabla F_{i}(x)\|^{2}$$
	holds for all $\lambda\in\Delta_{m}$. 
\end{proof}
\par Next, we give sufficient conditions for local superlinear convergence. The proof of the following results is inspired by \cite[Theorem 5.1]{7}.
\begin{theo}\label{t5.1}
	Denote by $\{x_{k}\}$ a sequence produced by VMM-BFGS. Suppose $V\subset\mathbb{R}^{n}$, $\sigma\in(0,1)$, $k_{0}\in\mathbb{N}$, $a,b,r,\delta,\epsilon>0$ and 
	\begin{itemize}
		\item[$\mathrm{(a)}$] $aI\leqslant B_{k}\leqslant bI,\ \forall k,$
		\item[$\mathrm{(b)}$] $\|\nabla^{2}F_{i}(x)-\nabla^{2}F_{i}(y)\|<\frac{\epsilon}{2},\ \forall x,y\in V$ with $\| x-y\|<\delta$, $i=1,2,...,m,$
		\item[$\mathrm{(c)}$] $\|(\sum\limits_{i=1}^{m}\lambda^{k}_{i}\nabla^{2}F_{i}(x_{k})-B_{k})(y-x_{k})\|<\frac{\epsilon}{2}\|y-x_{k}\|,\ \forall k\geqslant k_{0},\ y\in V,$
		\item[$\mathrm{(d)}$] $\frac{\epsilon}{a}\leqslant1-\sigma$,
		\item[$\mathrm{(e)}$] $\textbf{B}[x_{k_{0}},r]\subset V$,
		\item[$\mathrm{(f)}$] $\| d_{k_{0}}\|\leqslant\min\{\delta,r(1-\frac{\epsilon}
		{a})\}$.
	\end{itemize}
	Then, for all $k\geqslant k_{0}$, we have 
	\begin{itemize}
		\item[$\mathrm{(i)}$] $\| x_{k}-x_{k_{0}}\|\leqslant\frac{1-(\frac{\epsilon}
			{a})^{k-k_{0}}}{1-\frac{\epsilon}
			{a}}\|d_{k_{0}}\|$,
		\item[$\mathrm{(ii)}$] $\| d_{k}\|\leqslant(\frac{\epsilon}
		{a})^{k-k_{0}}\| d_{k_{0}}\|$, 
		\item[$\mathrm{(iii)}$] $\alpha_{k}=1$,
		\item[$\mathrm{(iv)}$] $\| d(x_{k+1})\|\leqslant(\frac{\epsilon}
		{a})\| d_{k}\|$.
	\end{itemize}
	Moreover, the sequence $\{x_{k}\}$ converges to some Pareto critical point $x^{*}\in\mathbb{R}^{n}$ with
	$$\lim\limits_{k\rightarrow\infty}\frac{\|x_{k+1}-x^{*}\|}{\|x_{k}-x^{*}\|}=0,$$
	and the convergence rate of $\{x_{k}\}$ is superlinear.
\end{theo}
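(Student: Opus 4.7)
I will establish (i)--(iv) by simultaneous induction on $k\geq k_0$, and then deduce convergence and the superlinear rate. The base case $k=k_0$ makes (i) and (ii) trivial; verifying (iii) and (iv) requires the quadratic error bounds (\ref{E4.14})--(\ref{E4.15}), and hypotheses (e), (f) are precisely what is needed to place $y = x_{k_0}+d_{k_0}$ inside $V$ with $\|d_{k_0}\|<\delta$. For the inductive step, assume (i)--(iv) at index $k$. Since $\alpha_k=1$, the update reads $x_{k+1}=x_k+d_k$, so $\|x_{k+1}-x_{k_0}\|\leq\|x_k-x_{k_0}\|+\|d_k\|$ yields (i) at $k+1$, and (ii) at $k+1$ is immediate from (iv) at $k$. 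A geometric-series estimate based on (f) then shows $x_{k+1}$ and $x_{k+1}+d_{k+1}$ both lie in $\textbf{B}[x_{k_0},r]\subset V$ and $\|d_{k+1}\|<\delta$, so (\ref{E4.14})--(\ref{E4.15}) remain applicable at $k+1$ and (iii), (iv) propagate.

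\textbf{Key estimates.} For (iii), feed $y=x_k+d_k$ into (\ref{E4.15}) and use (\ref{E3.9}) and (\ref{E3.11}) to rewrite $\sum_i\lambda_i^k\nabla F_i(x_k)^{\mathrm T}d_k+\frac{1}{2}d_k^{\mathrm T}B_kd_k=\theta_k$; this gives
$$\sum_i\lambda_i^kF_i(x_k+d_k)-\sum_i\lambda_i^kF_i(x_k)\leq \theta_k+\frac{\epsilon}{2}\|d_k\|^2,$$
and Lemma \ref{L4.2}(a) combined with hypothesis (d) bounds the right-hand side by $\sigma\theta_k$, so the aggregated Armijo rule accepts $\alpha_k=1$. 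For (iv), apply (\ref{E4.14}) with $y=x_{k+1}$; the identity $\sum_i\lambda_i^k\nabla F_i(x_k)+B_kd_k=0$ from (\ref{E3.8}) cancels two terms, leaving $\|\sum_i\lambda_i^k\nabla F_i(x_{k+1})\|\leq\epsilon\|d_k\|$. Plugging this into Lemma \ref{L4.2}(a)(b) at the suboptimal multiplier $\lambda^k$ yields $\frac{a}{2}\|d(x_{k+1})\|^2\leq|\theta(x_{k+1})|\leq\frac{\epsilon^2}{2a}\|d_k\|^2$, which is (iv).

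\textbf{Convergence, superlinear rate, and the main obstacle.} From (ii), $\{\|d_k\|\}$ decays geometrically with ratio $\epsilon/a<1-\sigma<1$, so $\{x_k\}$ is Cauchy with limit $x^*\in\textbf{B}[x_{k_0},r]$; combined with $\|d_k\|\to 0$, Lemma \ref{l3.1}(b) gives that $x^*$ is Pareto critical. For the superlinear rate, I would invoke that under BFGS updates the Dennis--Mor\'e-type condition (\ref{E5}) allows the constant $\epsilon$ in hypothesis (c) to be made arbitrarily small by enlarging $k_0$; applying the theorem along a null sequence $\epsilon_j\downarrow 0$ gives $\|d_{k+1}\|/\|d_k\|\to 0$, and writing $x_k-x^*=-\sum_{j\geq k}d_j$ then forces $\|x_k-x^*\|/\|d_k\|\to 1$, whence $\|x_{k+1}-x^*\|/\|x_k-x^*\|\to 0$. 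The main technical nuisance in the induction is the bookkeeping that keeps both $x_{k+1}$ and $x_{k+1}+d_{k+1}$ inside $\textbf{B}[x_{k_0},r]$, which is exactly why the restriction $\|d_{k_0}\|\leq r(1-\epsilon/a)$ appears in (f); the subsequent promotion from a fixed-$\epsilon$ linear rate to genuine superlinearity is the other delicate point, since it requires coupling the theorem's static hypothesis (c) with the asymptotic assumption (\ref{E5}) through a diagonal argument.
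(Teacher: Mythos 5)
Your proposal is correct and follows essentially the same route as the paper: the same induction establishing (i)--(iv) (with (\ref{E4.15}) plus Lemma \ref{L4.2}(a) and hypothesis (d) yielding acceptance of the unit step, and (\ref{E4.14}) with the KKT identity $\sum_i\lambda_i^k\nabla F_i(x_k)+B_kd_k=0$ yielding the contraction $\|d_{k+1}\|\leqslant(\epsilon/a)\|d_k\|$), followed by the same promotion to superlinearity by re-invoking the theorem with a shrinking $\hat{\epsilon}$ justified by (\ref{E5}). The only cosmetic difference is that the paper sandwiches $\|x^*-x_k\|$ between explicit multiples of $\|d_k\|$ rather than phrasing it as $\|x_k-x^*\|/\|d_k\|\rightarrow1$, which is the same estimate.
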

\begin{proof} First we prove that if assertions (i) and (ii) hold for some $k\geqslant k_{0}$, then assertions (iii) and (iv) also hold for that $k$.
	\par From assertions (i) and (ii), we have
	\begin{equation}\label{e5.0}
		\|x_{k} + d_{k}-x_{k_{0}}\|\leqslant\|x_{k}-x_{k_{0}}\|+\|d_{k}\|\leqslant\frac{1-(\frac{\epsilon}{a})^{k-k_{0}+1}}{1-\frac{\epsilon}
			{a}}\|d_{k_{0}}\|.
	\end{equation}
	Then, by assumptions (d) and (f), we have
	\begin{equation}\label{e5.1}
		x_{k},x_{k}+d_{k}\in \textbf{B}[x_{k_{0}},r]\ \mathrm{and} \ \|x_{k} + d_{k}-x_{k}\|\leqslant\delta.
	\end{equation}
	Combining (\ref{E4.15}) with assumptions (b) and (c), it follows that
	\begin{align*}
		\sum\limits_{i=1}^{m}\lambda^{k}_{i}F_{i}(x_{k}+d_{k}) - \sum\limits_{i=1}^{m}\lambda^{k}_{i}F_{i}(x_{k})
		&\leqslant\sum\limits_{i=1}^{m}\lambda^{k}_{i}\nabla F_{i}(x_{k})^{\mathrm T}d_{k} +\frac{1}{2}d_{k}^{\mathrm T}B_{k}d_{k} + \frac{\epsilon}{2}\|d_{k}\|^{2}\\
		&=\theta_{k} + \frac{\epsilon}{2}\|d_{k}\|^{2}\\
		&=\sigma\theta_{k} + (1-\sigma)\theta_{k}+\frac{\epsilon}{2}\|d_{k}\|^{2},
	\end{align*}
	where the first equality comes from (\ref{E3.9}) and (\ref{E3.11}).
	Since $\theta_{k}\leqslant0$, from Lemma \ref{L4.2} and assumptions (a) and (d), we have
	$$(1-\sigma)\theta_{k}+\frac{\epsilon}{2}\|d_{k}\|^{2}\leqslant\frac{-a(1-\sigma)+\epsilon}{2}\|d_{k}\|^{2}\leqslant0,$$
	thus,
	$$\sum\limits_{i=1}^{m}\lambda^{k}_{i}F_{i}(x_{k}+d_{k}) - \sum\limits_{i=1}^{m}\lambda^{k}_{i}F_{i}(x_{k})\leqslant\sigma\theta_{k}.$$
	Therefore, Armijo conditions hold for $\alpha_{k}=1$, i.e., assertion (iii) holds. We set $x_{k+1}=x_{k}+d_{k}$, and use (\ref{e5.1}) to obtain
	$$x_{k},x_{k+1}\in\textbf{B}[x_{k_{0}},r]\ \mathrm{and} \ \|x_{k+1}-x_{k}\|\leqslant\delta.$$
	\par Next, we prove assertion (iv). Define $v_{k+1}=\sum\limits_{i=1}^{m}\lambda^{k}_{i}\nabla F_{i}(x_{k+1})$, it follows by Lemma \ref{L4.2} (a) and (b) that
	\begin{equation}\label{e5.2}
		\frac{a}{2}\|d_{k+1}\|^{2}\leqslant|\theta_{k+1}|\leqslant\frac{1}{2a}\|v_{k+1}\|^{2}.
	\end{equation}
	In the following, we estimate $\|v_{k+1}\|$. From inequality (\ref{E4.14}) and assumptions (b) and (c), we have
	\begin{align*}
		\|\sum\limits_{i=1}^{m}\lambda^{k}_{i}\nabla F_{i}(x_{k+1})-\sum\limits_{i=1}^{m}\lambda^{k}_{i}\nabla F_{i}(x_{k})-B_{k}d_{k}\|\leqslant\epsilon\|d_{k}\|.
	\end{align*}
	Substituting $d_{k}= - B_{k}^{-1}(\sum\limits_{i=1}^{m}\lambda^{k}_{i}\nabla F_{i}(x_{k}))$ into the last inequality, we have 
	$$\|v_{k+1}\|\leqslant\epsilon\|d_{k}\|,$$
	which together with (\ref{e5.2}) gives
	$$\|d_{k+1}\|\leqslant\frac{\epsilon}{a}\|d_{k}\|.$$
	So assertion (iv) is valid.
	\par Now we prove that assertions (i) and (ii) hold for all $k\geqslant k_{0}$ by induction. For $k=k_{0}$, they hold trivially. Then, we assume that assertions (i) and (ii) hold for some $k>k_{0}$, as we have already shown, assertions (iii) and (iv) also hold for that $k$. It follows by assertion (iii) that $x_{k+1}=x_ {k}+d_{k}$. Combined with (\ref{e5.0}), we conclude that assertion (i) holds for $k+1$. Assertion (ii) for $k+1$ follows directly from assertion (iv) and our inductive hypothesis.
	\par From assertions (ii) and (iii), we have
	$$\sum\limits_{k=k_{0}}^{\infty}\|x_{k+1}-x_{k}\|=\sum\limits_{k=k_{0}}^{\infty}\|d_{k}\|\leqslant\| d_{k_{0}}\|\sum\limits_{k=k_{0}}^{\infty}(\frac{\epsilon}
	{a})^{k-k_{0}}<\infty,$$
	the last inequality is given by assumption (d).
	Then the sequence $\{x_{k}\}$ is a Cauthy sequence, i.e., there exists $x^{*}$ such that $x_{k}\rightarrow x^{*}$. In view of assumption (a), there exists an infinite index set $K$ and a positive definite matrix $B^{*}$ such that
	$$B_{k}\rightarrow B^{*},\ k\in K.$$ This combined with Lemma \ref{l3.1} (c), the facts $x_{k}\rightarrow x^{*}$ and $d_{k}\rightarrow0$ implies 
	$$d(x^{{*}})=0.$$
	Therefore, by Lemma \ref{l3.1} (b), $x^{*}$ is a Pareto critical point.
	\par We will now prove superlinear convergence of $\{x_{k}\}$. Define 
	$$r_{k}=\frac{(\frac{\epsilon}{a})^{k-k_{0}}}{1-\frac{\epsilon}{a}}\|d_{k_{0}}\|\ \mathrm{and}\ \delta_{k}=(\frac{\epsilon}{a})^{k-k_{0}}\|d_{k_{0}}\|.$$
	By virtue of triangle inequality, assertion (i) and assumptions (e) and (f), we have
	$$\textbf{B}[x_{k},r_{k}]\subset \textbf{B}[x_{k_{0}},r]\subset V.$$ 
	Take any $\xi>0$ and define
	$$\hat{\epsilon}=\min\{a\frac{\xi}{1+2\xi},\epsilon\}.$$
	If $k$ is suffciently large, then
	$$\|\nabla^{2}F_{i}(x)-\nabla^{2}F_{i}(y)\|<\frac{\hat{\epsilon}}{2},\ \forall x,y\in \textbf{B}[x_{k},r_{k}]\  \mathrm{with}\ \| x-y\|<\delta_{k}, i=1,2,...,m,$$ 
	and
	$$\|(\sum\limits_{i=1}^{m}\lambda^{k}_{i}\nabla^{2}F_{i}(x_{k})-B_{k})(y-x_{k})\|<\frac{\hat{\epsilon}}{2}\|y-x_{k}\|,\ \forall k\geqslant k_{0},\ y\in \textbf{B}[x_{k},r_{k}]$$
	hold. Therefore, assumptions (a)-(f) are satisfied for $\hat{\epsilon}$, $r_{k}$, $\delta_{k}$ and $x_{k}$. 
	\par From assertion (i), we have
	\begin{align*}
		\|x^{*}-x_{k}\|&=\lim\limits_{j\rightarrow\infty}\|x_{j}-x_{k}\|\\
		&\leqslant\lim\limits_{j\rightarrow\infty}\frac{1-(\frac{\hat{\epsilon}}
			{a})^{j-k}}{1-\frac{\hat{\epsilon}}
			{a}}\|d_{k}\|\\
		&=\frac{1}{1-\frac{\hat{\epsilon}}
			{a}}\|d_{k}\|.
	\end{align*}
	The latter, together with assertion (iv), leads to
	\begin{equation}\label{E4.19}
		\|x^{*}-x_{k+1}\|\leqslant\frac{1}{1-\frac{\hat{\epsilon}}
			{a}}\|d_{k+1}\|\leqslant\frac{\frac{\hat{\epsilon}}
			{a}}{1-\frac{\hat{\epsilon}}
			{a}}\|d_{k}\|.
	\end{equation}
	Hence,
	\begin{align*}
		\|x^{*}-x_{k}\|&\geqslant\|x_{k+1}-x_{k}\|-\|x^{*}-x_{k+1}\|\\
		&\geqslant\|d_{k}\|-\frac{\frac{\hat{\epsilon}}
			{a}}{1-\frac{\hat{\epsilon}}
			{a}}\|d_{k}\|\\
		&=\frac{1-2\frac{\hat{\epsilon}}
			{a}}{1-\frac{\hat{\epsilon}}
			{a}}\|d_{k}\|.
	\end{align*}
	By the definition of $\hat{\epsilon}$, it is obvious that $1-\frac{\hat{\epsilon}}
	{a}>0$ and $1-2\frac{\hat{\epsilon}}
	{a}>0$. So,
	$$\|x^{*}-x_{k+1}\|\leqslant\frac{\frac{\hat{\epsilon}}
		{a}}{1-2\frac{\hat{\epsilon}}
		{a}}\|x^{*}-x_{k}\|.$$
	Recall the definition of $\hat{\epsilon}$, we get
	$$\|x^{*}-x_{k+1}\|\leqslant\xi\|x^{*}-x_{k}\|.$$
	Since $\xi>0$ is arbitrary chosen, which implies that $\{x_{k}\}$ converges superlinearly to $x^{*}$. 
\end{proof}
As the Theorem \ref{t5.1} showed, when the initial point is sufficiently close to a Pareto critical point, then the whole sequence produced by VMM-BFGS remains in a vicinity of the initial point, and converges superlinearly to some Pareto critical point. 

\section{Numerical Results}
One of the reasons why we propose the variable metric method for MOPs is that comparing with direction-finding subproblems in \cite{7,8a}, (\ref{DP}) in VMM is easy to solve because of unit simplex constraint. Some previous works \cite{28,29} already focused on it. In the sequel, we solve (\ref{DP}) based on Frank-Wolfe/condition gradient method \cite{27}.  
\par Next, we present some
numerical results and demonstrate the numerical performance of VMM-BFGS for different problems. Some comparisons with quasi-Newton method (QNM) \cite{8a} are presented to show the efficiency of our algorithm. All numerical experiments were implemented in Python 3.7 and executed on a personal computer equipped with Intel Core i5-6300U, 2.40 GHz processor, and 4 GB of RAM.
\par In line search, we set $\sigma=0.1$, $\gamma=0.5$. For all test cases solved by VMM-BFGS and QNM, we use $|\theta_{k}|<10^{-8}$ for stopping criterion. The maximum number of iterations is set to 500. The tested algorithms are executed on several test problems, and problem illustration and numerical results are presented in Table 1 and Table 2, respectively. The dimension of variables is presented in the second column (see Table 1). Each problem is computed 200 times with starting points randomly selected in the internals of given lower bounds $x_{L}$ and upper bounds $x_{U}$. As noted in \cite{7}, box constraints can be handled by augmented Armijo line search, which restricts $x_{L}-x\leqslant d\leqslant x_{U}-x$, but $x_{L}=-\infty$ and $x_{U}=+\infty$ are used in our implementation. The subproblem (\ref{DP}) in VMM-BFGS is solved by our codes based on the Frank-Wolfe method, the QCPs in QNM are solved by CVXPY \cite{26}, which is a Python-embedded modelling language for convex optimization problems. The averages of 200 runs record the number of iterations, number of function evaluations and CUP time. 
\begin{table}[H]\centering
	\begin{tabular}{ccccc}
		\hline
		Problem  & n   & $x_{L}$                                     & $x_{U}$                & Reference \\ \hline
		Deb   & 2   & {[}0.1,0.1{]}                           & {[}1,1{]}        & \cite{16}       \\
		JOS1a & 100  & -{[}2,...,2{]}                        & {[}2,...,2{]}    & \cite{18}       \\
		JOS1b & 200 & -{[}2,...,2{]}                        & {[}2,...,2{]}    & \cite{18}       \\
		JOS1c & 500 & -{[}2,...,2{]}                        & {[}2,...,2{]}    & \cite{18}       \\
		JOS1d & 1000 & -{[}2,...,2{]}                        & {[}2,...,2{]}    & \cite{18}       \\
		JOS1e & 100 & -10{[}1,...,1{]}                      & 10{[}1,...,1{]}  & \cite{18}       \\
		JOS1f & 100 & -50{[}1,...,1{]}                      & 50{[}1,...,1{]}  & \cite{18}       \\
		JOS1g & 100 & \multicolumn{1}{l}{-100{[}1,...,1{]}} & 100{[}1,...,1{]} & \cite{18}       \\
		JOS1h & 200 & -100{[}1,...,1{]}                     & 100{[}1,...,1{]} & \cite{18}       \\
		PNR   & 2   & -{[}2,2{]}                            & {[}2,2{]}        & \cite{19}       \\
		WIT0  & 2   & -{[}2,2{]}                            & {[}2,2{]}        & \cite{20}       \\
		WIT1  & 2   & -{[}2,2{]}                            & {[}2,2{]}        & \cite{20}       \\
		WIT2  & 2   & -{[}2,2{]}                            & {[}2,2{]}        & \cite{20}       \\
		WIT3  & 2   & -{[}2,2{]}                            & {[}2,2{]}        & \cite{20}       \\
		WIT4  & 2   & -{[}2,2{]}                            & {[}2,2{]}        & \cite{20}       \\
		WIT5  & 2   & -{[}2,2{]}                            & {[}2,2{]}        & \cite{20}       \\
		WIT6  & 2   & -{[}2,2{]}                            & {[}2,2{]}        & \cite{20}       \\ \hline
	\end{tabular}
	\caption{Description of all test problems used in numerical experiments.}
\end{table}
\begin{exam}
	Consider the following multiobjective optimization:
	$$(\mathrm{Deb})\ \min\limits_{x_{1}>0}(x_{1},\frac{g(x_{2})}{x_{1}})$$
	where $g(x_{2})=2-exp\{-(\frac{x_{2}-0.2}{0.004})^{2}\}-0.8exp\{-(\frac{x_{2}-0.6}{0.4})^{2}\}$.
\end{exam}
Problem Deb is a nonconvex multiobjective optimization problem with a bimodal function $g(x_{2})$, in view of Fig. 1,  the basin of global minimizer $x_2 = 0.2$ is sharp (see Fig.1 (a)), then most of points obtained by VMM-BFGS are local Pareto points (see Fig.1 (b)). 
\begin{figure}[H]
	\centering
	\centering
	\subfigure[]{
		\includegraphics[scale=0.4]{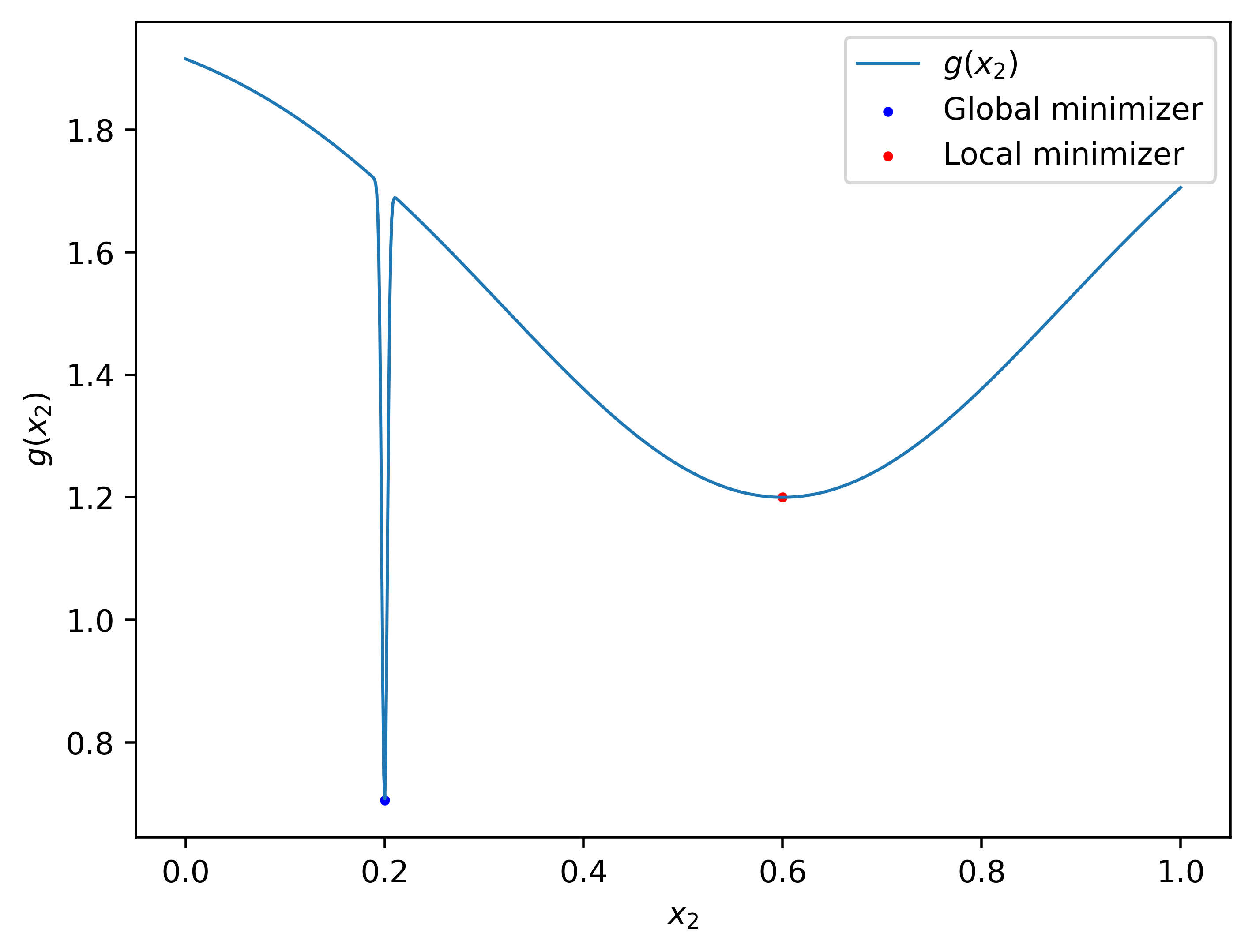}
	}
	\subfigure[]{
		\includegraphics[scale=0.4]{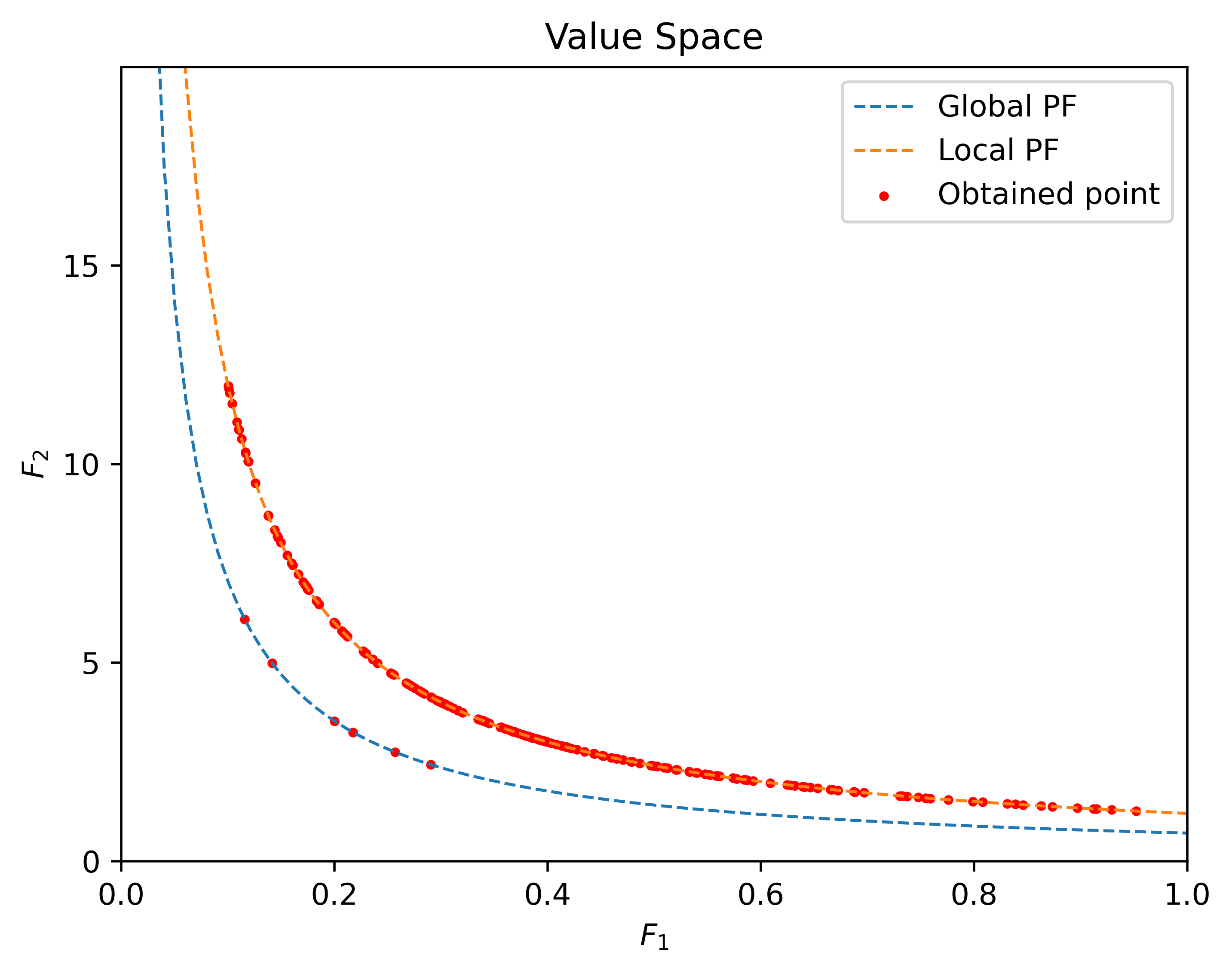} 
	}
	\caption{(a) Local and global minimizers of $g(x_{2})$. (b) Numerical results in value space obtained by VMM-BFGS for problem Deb with 200 random starting points.}	
\end{figure}
\begin{exam}
	Consider the following multiobjective optimization:
	$$(\mathrm{JOS1})\ \min\limits_{x\in \mathbb{R}^{n}}(\frac{1}{n}\sum\limits_{i=1}^{n}x_{i}^{2},\frac{1}{n}\sum\limits_{i=1}^{n}(x_{i}-2)^{2}).$$
\end{exam}
Problem JOS1a-h are simple convex multiobjective problems, the problems pose no challeges to our method and QNM. As it can be seen from Fig. 2, due to the different direction-finding subproblems, VMM-BFGS shows significant improvement in CPU time.
\begin{figure}[H]
	\centering
	\includegraphics[width=6cm,height=4.5cm]{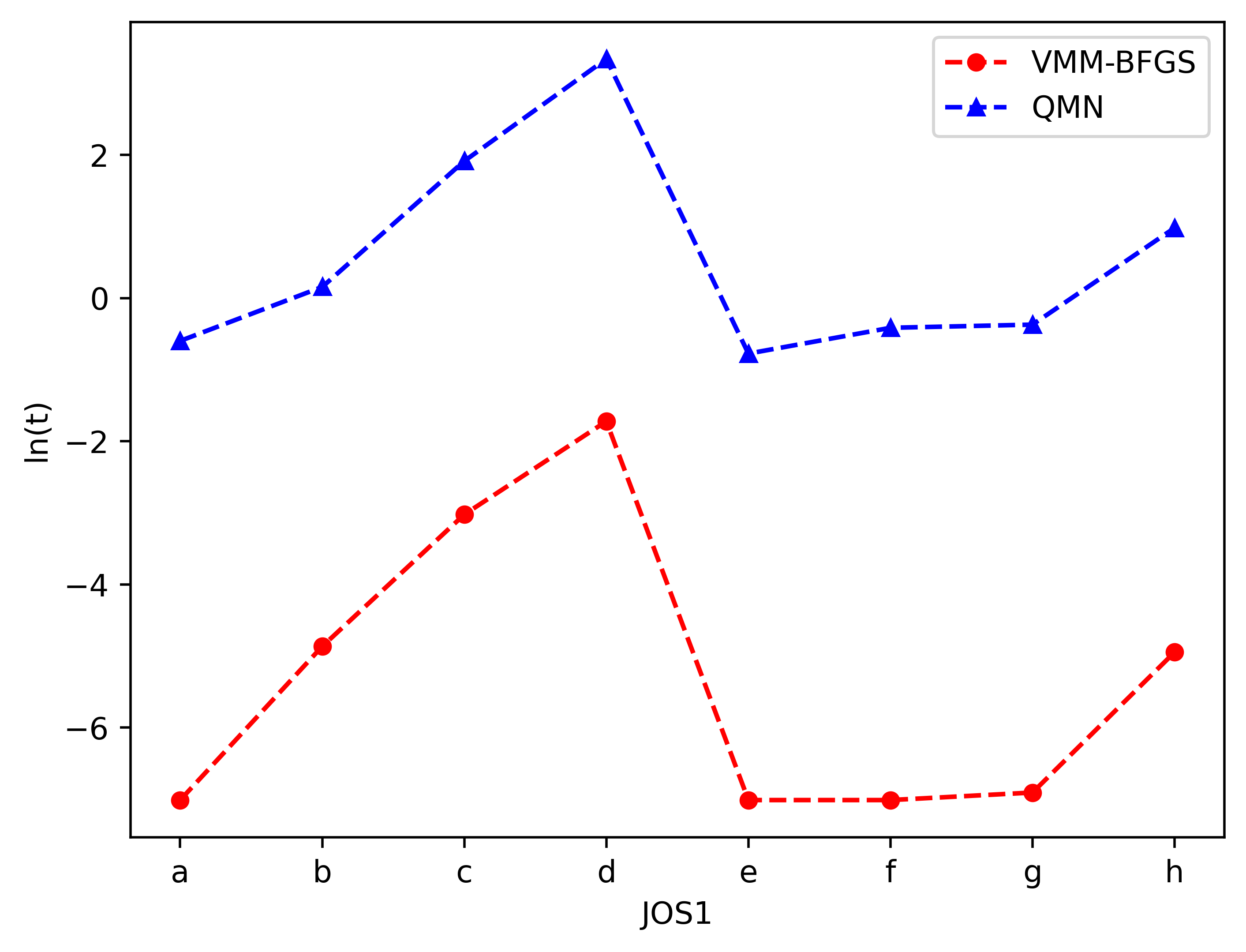}
	\caption{CUP time(t) of VMM-BFGS and QNM for problems JOS1a-h.}	
\end{figure}
\begin{exam}
	Consider the following multiobjective optimization:
	$$(\mathrm{PNR})\ \min\limits_{x\in \mathbb{R}^{2}}(f_{1}(x),f_{2}(x))$$
	where $f_{1}(x)=x_{1}^{4}+x_{2}^{4}-x_{1}^{2}+x_{2}^{2}-10x_{1}x_{2}+0.25x_{1}+20$, and $f_{2}(x)=(x_{1}-1)^{2}+x_{2}^{2}.$
\end{exam}
\par In Fig. 3, PNR with 200 initial points is solved by VMM-BFGS and QNM, respectively. A significant distinction between Fig. 3 (a) and Fig. 3 (b) is that objective functions are not monotone in VMM-BFGS.
\begin{figure}[H]
	\centering
	\subfigure[VMM-BFGS]{
		\includegraphics[scale=0.4]{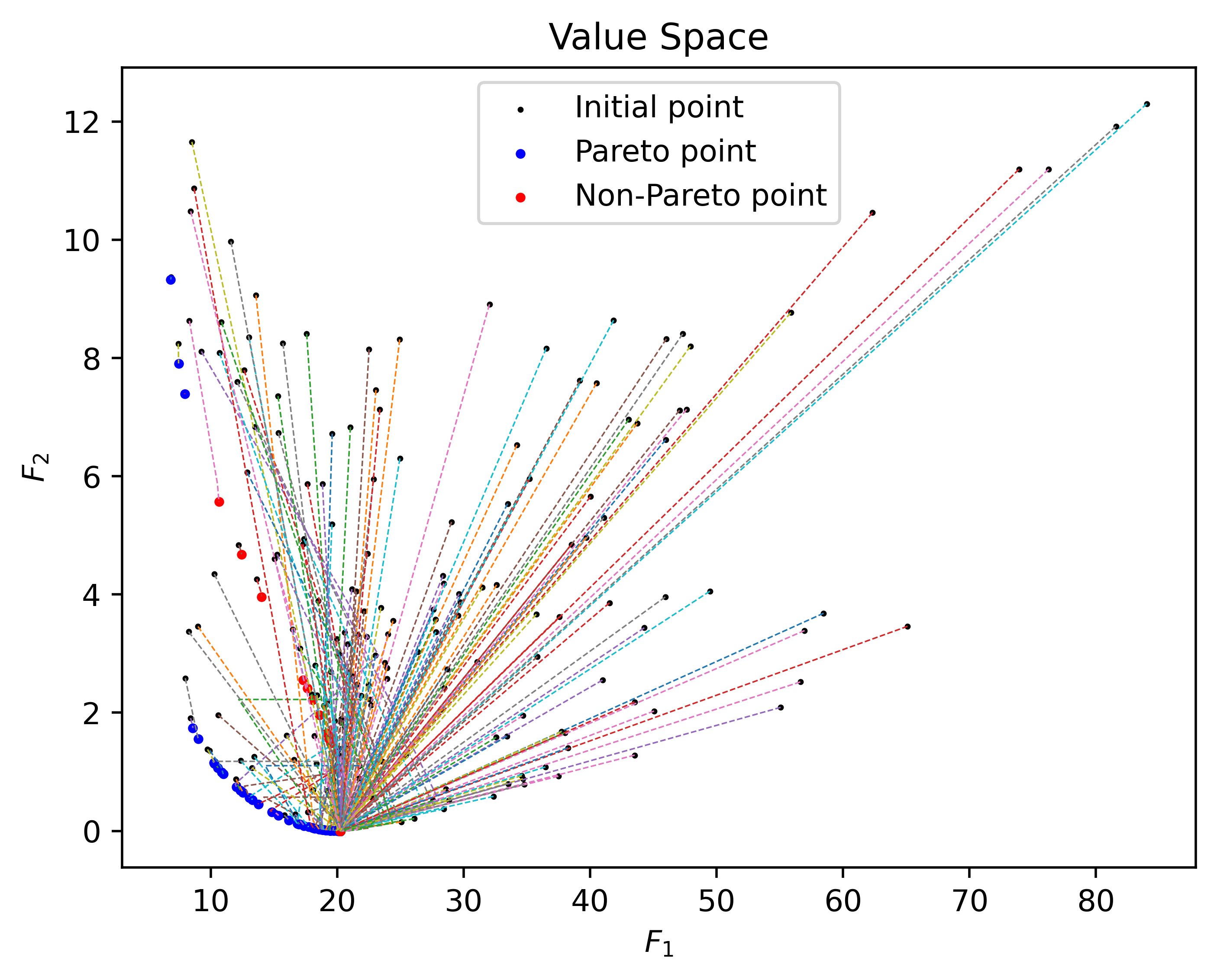}
	}
	\subfigure[QNM]{
		\includegraphics[scale=0.4]{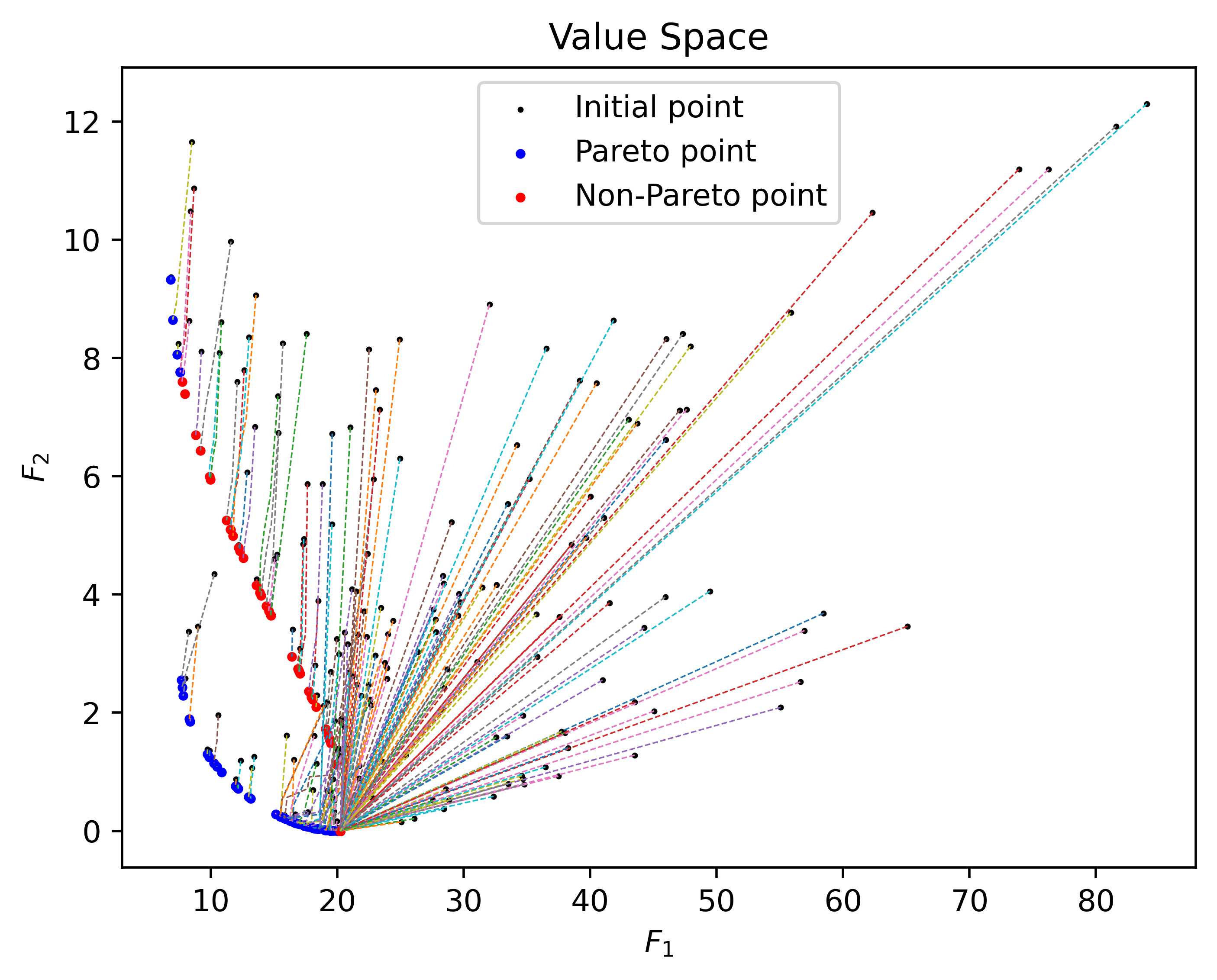} 
	}
	\caption{Iterations in value space obtained by VMM-BFGS and QNM for problem PNR with 200 random starting points.}
\end{figure}
\begin{exam}
	Consider the following multiobjective optimization:
	$$(\mathrm{WIT0})\ \min\limits_{x\in \mathbb{R}^{2}}(f_{1}(x),f_{2}(x))$$
	where $f_{1}(x)=\frac{1}{2}(\sqrt{1+(x_{1}+x_{2})^{2}}+\sqrt{1+(x_{1}-x_{2})^{2}}+x_{1}-x_{2})+0.6e^{-(x_{1}-x_{2})^{2}}$, and $f_{2}(x)=\frac{1}{2}(\sqrt{1+(x_{1}+x_{2})^{2}}+\sqrt{1+(x_{1}-x_{2})^{2}}-x_{1}+x_{2})+0.6e^{-(x_{1}-x_{2})^{2}}.$
\end{exam}
\par While the line search in VMM-BFGS can be  interpreted as an adaptive weighted sum method, i.e., the weights change in each iteration. In Fig. 4, WIT0 has a nonconvex Pareto front, and VMM-BFGS can obtain the solutions to the nonconvex part of the Pareto front. However, the standard weighted sum method can not achieve the solutions.
\begin{figure}[H]
	\centering
	\subfigure[VMM-BFGS]{
		\includegraphics[scale=0.4]{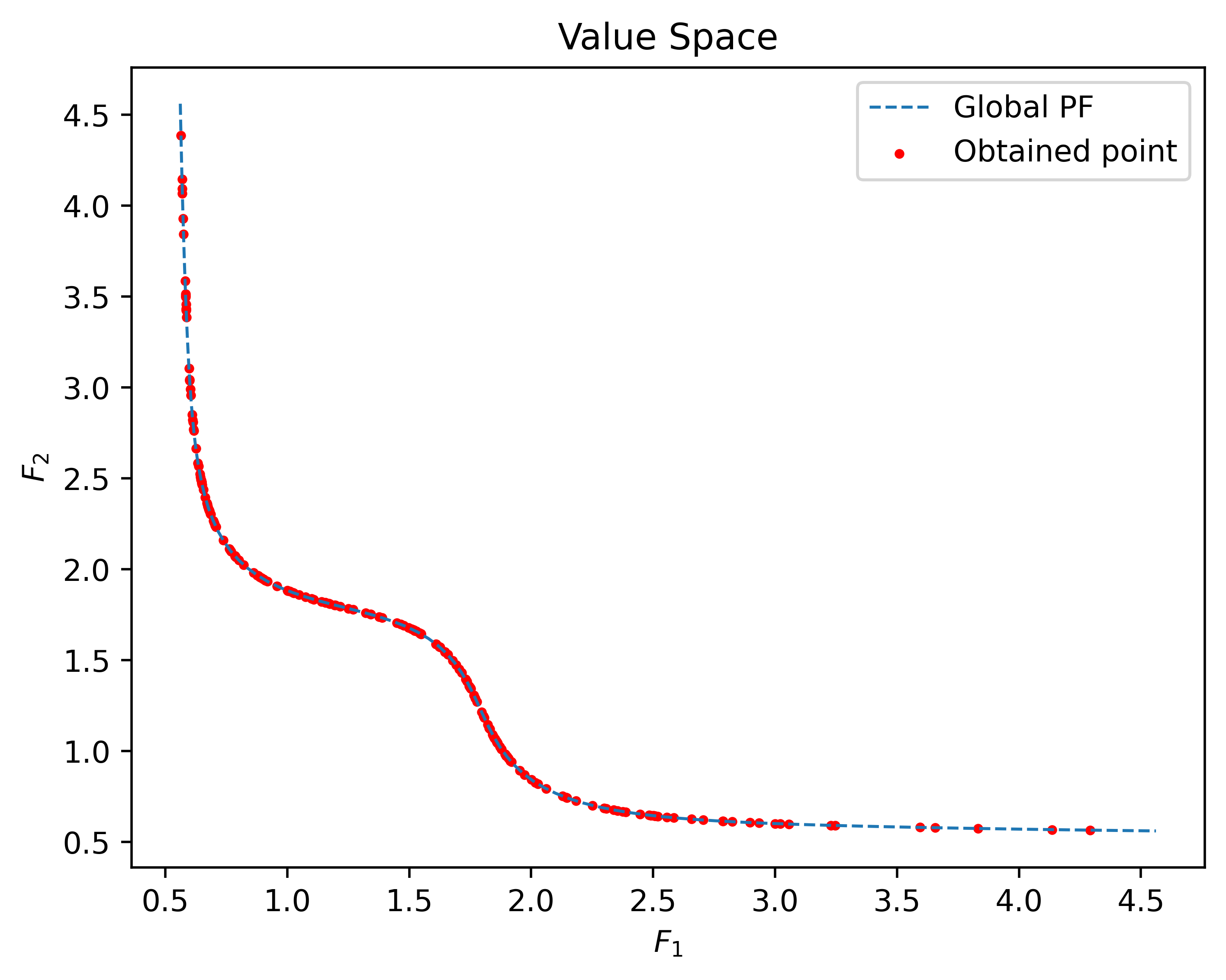}
	}
	\subfigure[weighted sum method]{
		\includegraphics[scale=0.4]{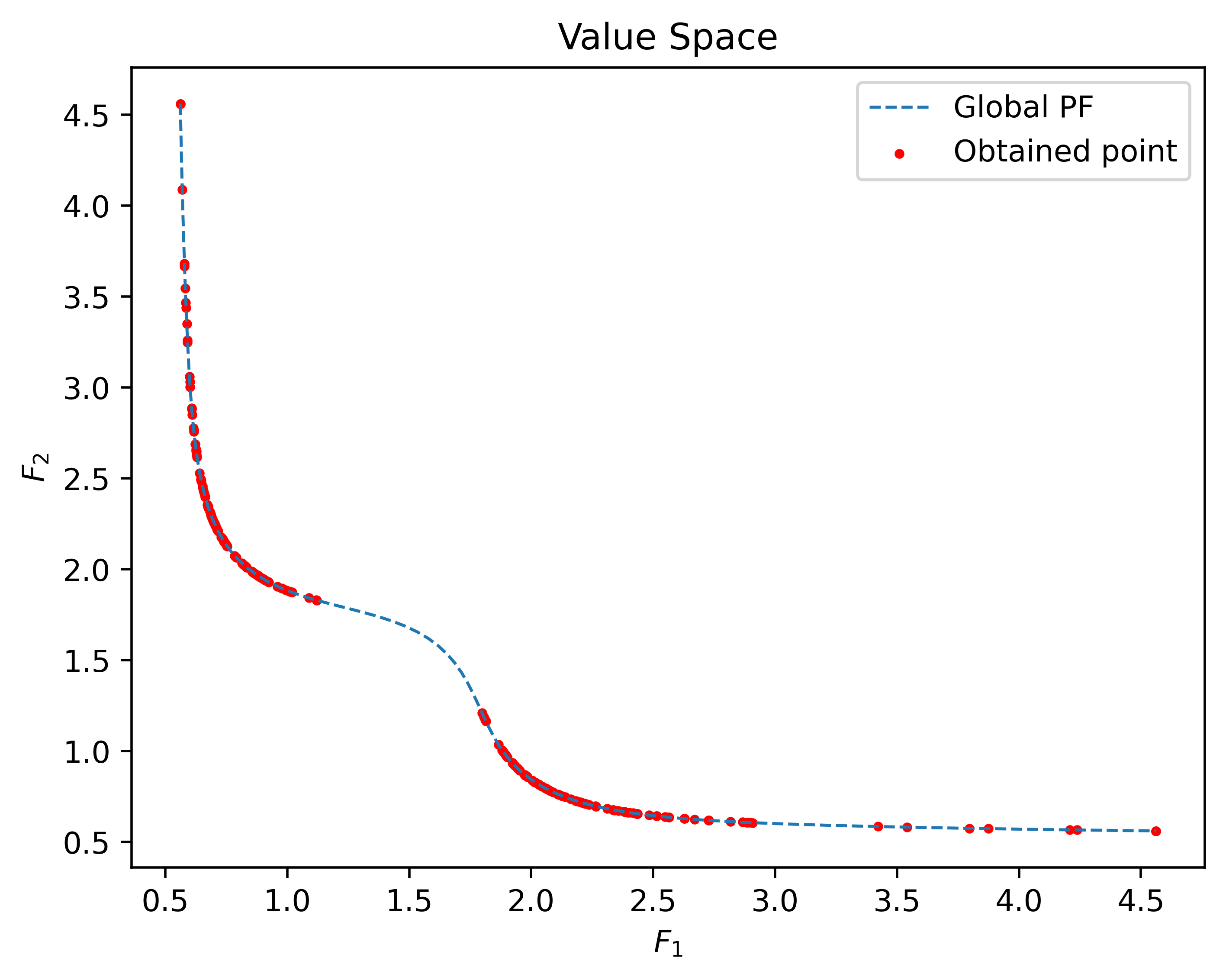} 
	}
	\caption{Numerical results in variable space obtained by VMM-BFGS and weighted sum method for problem WIT0 with 200 random starting points.}
\end{figure}
\begin{exam}
	Consider the following parametric multiobjective optimization:
	$$(\mathrm{WIT})\ \min\limits_{x\in \mathbb{R}^{2}}(f_{1}(x,\lambda),f_{2}(x,\lambda))$$
	where $f_{1}(x,\lambda)=\lambda((x_{1}-2)^{2}+(x_{2}-2)^{2})+(1-\lambda)((x_{1}-2)^{4}+(x_{2}-2)^{8})$, and $f_{2}(x,\lambda)=(x_{1}+2\lambda)^{2}+(x_{2}+2\lambda)^{2}$. Where $\lambda=0,0.5,0.9,0.99,0.999,1$ represent WIT1-6, respectively.
\end{exam}
Problems WIT1-5 are nonconvex MOPs. Comparing with points obtained by VMM-BFGS in Fig. 5 and real Pareto sets in \cite[Fig. 4.1]{20}, we conclude that VMM-BFGS can obtain Pareto solutions to nonconvex multiobjective optimization problems.
\begin{figure}[H]
	\centering
	\subfigure[variable space]{
		\includegraphics[scale=0.4]{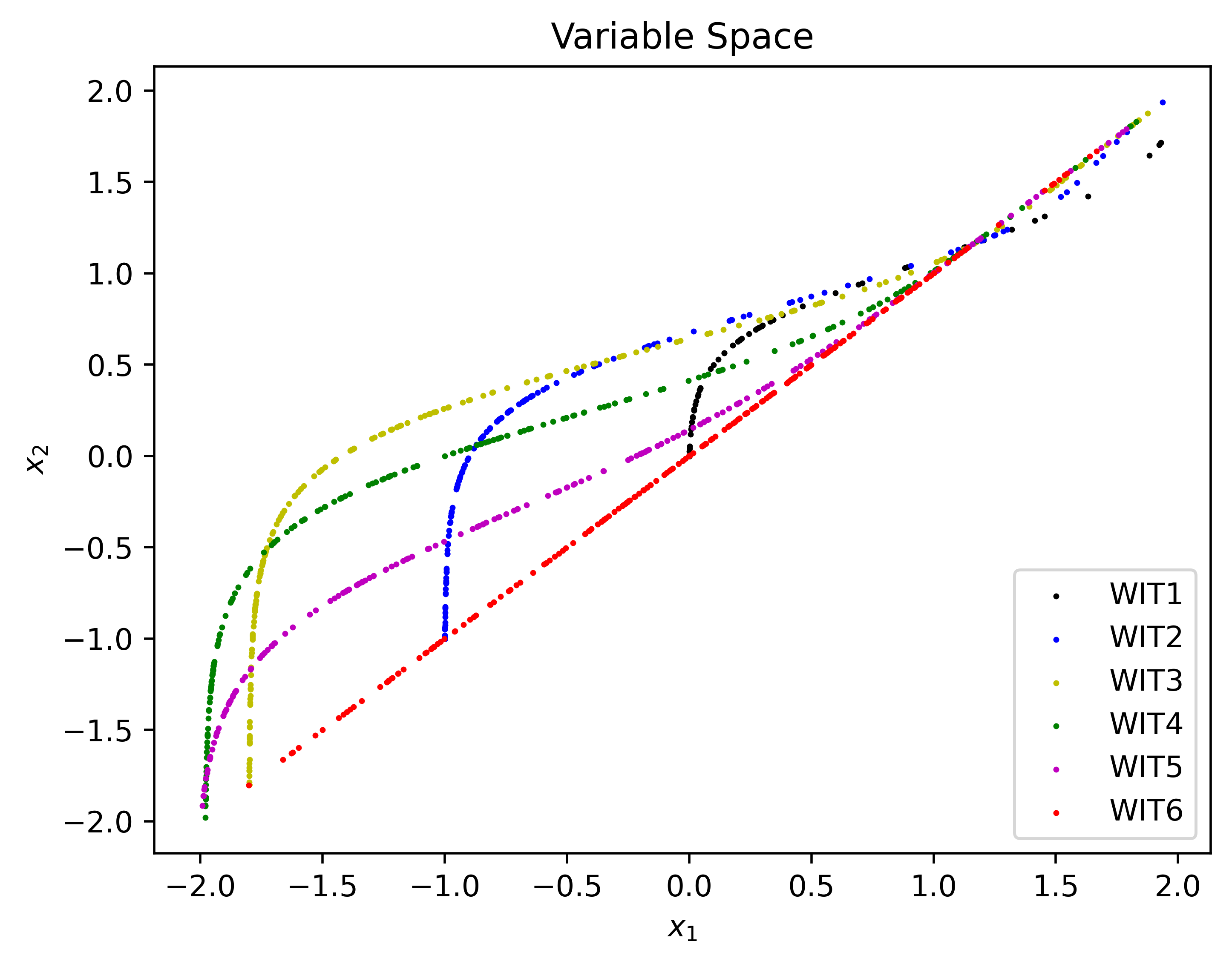}
	}
	\subfigure[objective space]{
		\includegraphics[scale=0.4]{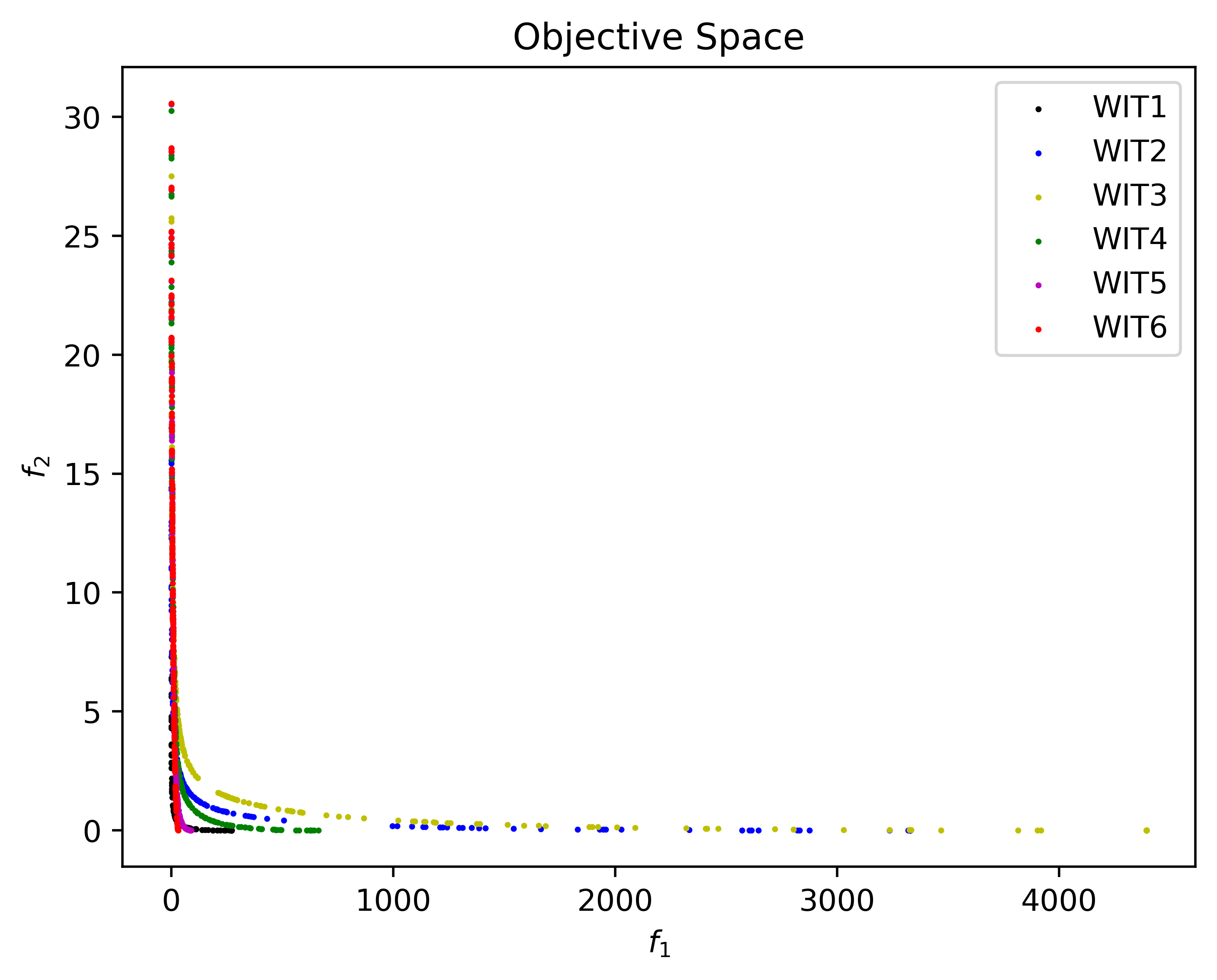} 
	}
	\caption{Numerical results in variable space and objective space obtained by VMM-BFGS for problems WIT1-6 with 200 random starting points.}	
\end{figure}

\begin{table}[H]\centering
	\begin{tabular}{ccccccccc}
		\hline
		Problem &  & \multicolumn{3}{c}{VMM-BFGS} &  & \multicolumn{3}{c}{QNM}  \\ \cline{3-5} \cline{7-9} 
		&  & iter    & feval   & time     &  & iter   & feval  & time   \\ \hline
		Deb     &  & 4.45    & 5.34    & 0.001 7   &  & 84.31  & 517.67 & 1.510 0   \\
		JOS1a   &  & 2.00    & 2.00    & 0.000 9   &  & 2.00   & 2.00   & 0.549 3 \\
		JOS1b   &  & 2.00    & 2.00    & 0.007 7   &  & 2.00   & 2.00   & 1.167 4 \\
		JOS1c   &  & 2.00    & 2.00    & 0.048 6   &  & 2.15   & 2.15   & 6.774 2 \\
		JOS1d   &  & 2.00    & 2.00    & 0.178 7   &  & 2.15   & 2.15   & 28.211 1 \\
		JOS1e   &  & 2.00    & 2.00    & 0.000 9   &  & 2.04   & 2.04   & 0.460 2 \\
		JOS1f   &  & 2.00    & 2.00    & 0.000 9   &  & 2.26   & 2.26   & 0.659 7 \\
		JOS1g   &  & 2.00    & 2.00    & 0.001 1   &  & 2.30   & 2.30   & 0.689 5 \\
		JOS1h   &  & 2.00    & 2.00    & 0.007 1   &  & 4.00   & 4.00   & 2.662 6 \\
		PNR     &  & 2.13    & 3.03    & 0.000 6   &  & 9.35   & 12.60  & 0.169 9 \\
		WIT0    &  & 3.94    & 4.39    & 0.001 6   &  & 3.83   & 4.28   & 0.089 5 \\
		WIT1    &  & 1.88    & 3.12    & 0.000 5   &  & 54.00  & 57.00  & 0.926 6 \\
		WIT2    &  & 2.63    & 3.66    & 0.000 8   &  & 101.34 & 104.95 & 1.622 7 \\
		WIT3    &  & 3.18    & 3.97    & 0.000 9   &  & 48.64  & 50.91  & 0.791 6 \\
		WIT4    &  & 3.26    & 3.94    & 0.000 9   &  & 9.84   & 10.98  & 0.174 1 \\
		WIT5    &  & 3.19    & 3.90    & 0.000 9   &  & 7.49   & 8.54   & 0.134 3 \\
		WIT6    &  & 1.00    & 2.00    & 0.000 4   &  & 1.05   & 2.05   & 0.032 3 \\ \hline
	\end{tabular}
	\caption{Number of average iterations (iter), number of average function evaluations (feval) and average CUP time (time) of VMM-BFGS and QNM implemented on different test problems, each set is executed 200 times.}
\end{table}
In view of the numerical results in Table 2. On the one hand, comparing with QNM, VMM-BFGS requires fewer number of iterations and function evaluations for all test problems besides WIT0. In particular, the required number of iterations and function evaluations for problems Deb and WIT1-3 decreases significantly. On the other hand, VMM-BFGS outperforms QNM greatly in terms of the cost of CPU running times.
\section{Conclusions }
In this paper, a variable metric method for unconstrained multiobjective optimization problems has been presented. Global and strong convergence theorems have been established in the generic framework. In particular, we use a common matrix to approximate Hessian matrices of all objective functions. Moreover, a new nonmonotone line search technique has been proposed to achieve the local superlinear convergence rate of the proposed algorithm, which gives a positive answer to the open question in \cite{10a}. In numerical experiments, the Frank-Wolfe method is applied to solve (\ref{DP}). Numerical results demonstrate the efficiency of the proposed method. There are some features of the proposed method:
\begin{itemize}
	\item[1.] VMM-BFGS is local superlinear convergence under some reasonable assumptions, and direction-finding problems can be solved by the Frank-Wolfe method efficiently. 
	\item[2.] Comparing with the quasi-Newton method in \cite{8a}, the local superlinear convergence of VMM-BFGS holds without the assumption that all objective functions are locally strongly convex, thus, MOPs with some linear objective functions can be solved by the proposed algorithm.
	\item[3.] The line search condition is mild in VMM-BFGS, which does not restrict all objective functions are monotone in each iteration, so a larger step size can be accepted.  
\end{itemize}
\par In the future, we will consider the global convergence of the proposed nonmonotone line search technique, meanwhile, test its abilities in other gradient-based methods with solvable dual problems. Due to the efficiency of the proposed method, a hybrid method with MOEA/D \cite{30} for MOPs will be considered as well.

\end{document}